\newcommand{\tr}[1]{\textcolor{violet}{#1}}
\definecolor{darkblue}{rgb}{0.0, 0.0, 0.55}
\newtheorem{remark}{Remark}
\newcommand{\h}[1]{\mathbf{#1}}
\theoremstyle{definition}
\newtheorem{definition}{Definition}
\newtheorem{theorem}{Theorem}
\newtheorem{lemma}{Lemma}
\newtheorem{prop}{Proposition}
\begin{document}
	%
	% paper title
	% Titles are generally capitalized except for words such as a, an, and, as,
	% at, but, by, for, in, nor, of, on, or, the, to and up, which are usually
	% not capitalized unless they are the first or last word of the title.
	% Linebreaks \\ can be used within to get better formatting as desired.
	% Do not put math or special symbols in the title.
	\title{Accelerated Schemes for the $L_1/L_2$ Minimization}
	%
	%
	% author names and IEEE memberships
	% note positions of commas and nonbreaking spaces ( ~ ) LaTeX will not break
	% a structure at a ~ so this keeps an author's name from being broken across
	% two lines.
	% use \thanks{} to gain access to the first footnote area
	% a separate \thanks must be used for each paragraph as LaTeX2e's \thanks
	% was not built to handle multiple paragraphs
	%
	
	\author{Chao Wang, Ming Yan, Yaghoub Rahimi, Yifei Lou  % <-this % stops a space
		\thanks{C. Wang and Y. Lou are with the Department of Mathematical Sciences, University of Texas at Dallas, Richardson, TX 75080 USA (E-mail:  chaowang.hk@gmail.com, yifei.lou@utdallas.edu). Y. Lou was partially supported by NSF Awards DMS 1522786 and 1846690. }
		\thanks{Y. Rahimi is with the School of Mathematics, Georgia Institute of Technology, Atlanta, GA 30332 USA (E-mail: yrahimi6@gatech.edu).}
		\thanks{M. Yan is with the Department of Computational Mathematics, Science and Engineering (CMSE) and the Department of Mathematics, Michigan State University, East Lansing, MI, 48824 USA  (Email: yanm@math.msu.edu). M. Yan was partially supported by NSF award DMS 1621798.  }}

	\maketitle
	
	% As a general rule, do not put math, special symbols or citations
	% in the abstract or keywords.
	\begin{abstract}
		In this paper, we consider the $L_1/L_2 $ minimization for  sparse recovery and study its relationship with the $L_1$-$ \alpha L_2 $ model. Based on this relationship, we propose three numerical algorithms to minimize this ratio model, two of which work as adaptive schemes and greatly reduce the computation time.  Focusing on the two adaptive schemes, we  discuss their connection  to existing approaches  and analyze their convergence.  The experimental results demonstrate that the proposed algorithms are comparable to state-of-the-art methods in sparse recovery and work particularly well when the ground-truth signal has a high dynamic range. Lastly, we reveal some empirical evidence on the exact $L_1$ recovery  under various combinations of sparsity, coherence, and dynamic ranges, which calls for theoretical justification in the future.
	\end{abstract}
	
	% Note that keywords are not normally used for peerreview papers.
	\begin{IEEEkeywords}
		Sparsity, $L_0$,  adaptive scheme, dynamic range. 
	\end{IEEEkeywords}

	% For peer review papers, you can put extra information on the cover
	% page as needed:
	% \ifCLASSOPTIONpeerreview
	% \begin{center} \bfseries EDICS Category: 3-BBND \end{center}
	% \fi
	%
	% For peerreview papers, this IEEEtran command inserts a page break and
	% creates the second title. It will be ignored for other modes.
	\IEEEpeerreviewmaketitle

	\section{Introduction}
	\IEEEPARstart{I}{n} various science and engineering applications, one aims to seek for a low-dimensional representation from high-dimensional data, and 
	sparsity is a crucial assumption.  For example,  it is reasonable to assume in machine learning  \cite{tibshirani96lasso} that only a few features correspond to the response. In image processing \cite{ROF1992nonlinear}, the restored images are often piecewise constant, which means that gradients are sparse.  In  non-negative matrix factorization  \cite{plemmons1994nonnegative}, the low-rank decomposition  enforces sparsity with respect to singular values.

	Sparse signal recovery is to find the sparsest solution of $ A\mathbf{x} = \mathbf{b}$ where $ A \in \mathds{R}^{m \times n} $ ($ m \ll n$), $ \mathbf{x} \in \mathds{R}^n$, and $\mathbf{b} \in \mathds{R}^m$. We assume that $A$ has a full row rank  and $\h b$ is nonzero. 
	This problem is often referred to as \textit{compressed sensing} (CS) \cite{donoho06,CRT} in the sense that the sparse signal $\h x$ is compressible.
	Mathematically, it can be  formulated  by the $L_0$ minimization, 
	\begin{equation}\label{eq:l0}
	\min\limits_{\mathbf{x}\in \mathds{R}^n} \|\mathbf{x}\|_0 \quad \text{s.t.} \quad A\mathbf{x} = \mathbf{b}.
	\end{equation}
	Unfortunately,  the  $L_0$ problem  is known to be NP-hard \cite{natarajan95}. Various approaches  in sparse recovery have been investigated. Some greedy methods include orthogonal matching pursuit (OMP) \cite{OMP1993pati}, orthogonal least squares (OLS) \cite{OLS1989orthogonal}, and compressive sampling matching pursuit (CoSaMp) \cite{needell2009cosamp}. However, these greedy methods often lack of accuracy when $n$ is large. Alternatively,  approximations/relaxation approaches to  the $L_0$ norm have been sought. For example,
	%Convex/nonconvex relaxation are very classical and promising methods for approaching the $L_0$ model. 
	convex relaxation, referred to as  \textit{basis pursuit} (BP) \cite{chenDS98}, replaces  $L_0$ in \eqref{eq:l0} with the $L_1$ norm. Recently, nonconvex models attract considerate amount of attentions due to their sharper approximations of $L_0$ compared to the $L_1$ norm. Some popular nonconvex models include $L_p$ \cite{chartrand07,xuCXZ12,laiXY13}, $L_1$-$L_2$ \cite{yinEX14,louYHX14},  transformed $L_1$ (TL1) \cite{lv2009unified,zhangX17,zhangX18}, nonnegative garrote \cite{breiman1995better}, and capped-$L_1$ \cite{peleg2008bilinear,zhang2009multi,shen2012likelihood}.
	Except for $L_1$-$L_2$, all of these nonconvex models involve one parameter to be determined and adjusted for different types of sparse recovery problems.

	In this paper, we study the ratio of $L_1 $ and $L_2 $ as a scale-invariant and parameter-free metric to approximate the desired scale-invariant $L_0$ norm. The ratio of $L_1 $ and $L_2$ can be traced back to \cite{hoyer2002}  as a sparsity measure,  and  its scale-invariant property was  explicitly mentioned in \cite{hurleyR09}. 
	%Recent studies indicate the ratio model is 
	Esser et al. \cite{esserLX13,yinEX14} focused on nonnegative signals and established the equivalence between  $L_1/L_2 $ and $L_0$.
	The ratio model was later formulated  as a nonlinear constraint that was solved by a lifted approach  \cite{esser2015lifted,esser2015resolving}.  Some applications of $L_1/L_2$ include blind deconvolution \cite{krishnan2011blind,repetti2015euclid} and  sparse filtering \cite{pham2017noise,jia2018sparse}.

	In our earlier work \cite{l1dl2}, we focused on a constrained minimization problem,
	\begin{equation}\label{equ:ratio}
	\min\limits_{\h x \in \mathds{R}^n}  \frac{\|\h x\|_1}{ \|\h x\|_2} \quad \mathrm{s.t.} \quad A \h x = \h b.
	\end{equation}
	Theoretically, we proved that  any $s$-sparse vector is a local minimizer of the $L_1 /L_2 $ model provided with a strong null space property (sNSP) condition. 
	Computationally, we considered to minimize \eqref{equ:ratio} via the alternating direction method of multipliers (ADMM) \cite{boydPCPE11admm}. In particular, we introduced two auxiliary variables and formed the augmented Lagrangian  as 
	\begin{align}
	L(\h x,\h y,\h z;\h v,\h w) = & \textstyle \frac{\|\h z\|_1}{ \|\h y\|_2} + I(A\h x-\h b)+  \frac{\rho_1}{2}\left\|\h x-\h y 
	+ \frac{1}{\rho_1} \h v\right\|_2^2 \nonumber \\
	& +  \textstyle  \frac{\rho_2}{2}\left\|\h x-\h z + \frac{1}{\rho_2} \h w \right\|_2^2,
	\end{align}
	where $I(\cdot)$ is  defined as 
	\begin{equation}\label{equ:indicator}
	I(\h t) = 
	\begin{cases}
	0,	&	\h t=\h 0,
	\\
	+\infty,	&	\text{otherwise}.
	\end{cases}
	\end{equation}
	There is a closed-form solution for each sub-problem. Please refer to \cite{l1dl2} for more details.

	This paper contributes  three  schemes to minimize \eqref{equ:ratio}. We demonstrate in experiments that the new schemes are computationally more efficiently compared to the previous ADMM approach.  The novelties of the paper are three-fold:
	\begin{enumerate}
		\item[(1)] Thanks to  the new schemes, $L_1/L_2$ can effectively deal with sparse signals with a high dynamic range, which is not the case for the  ADMM approach;
		\item[(2)] We reveal the connection of the proposed schemes to existing approaches, which helps to establish the convergence;
		\item[(3)] Our empirical results shed light about the effects of sparsity, coherence, and dynamic range on sparse recovery, which is new in the CS literature.
	\end{enumerate}

	The rest of the paper is organized as follows. \Cref{sect:model} is devoted to theoretical analysis on the relation between $L_1/L_2$  and $L_1$-$\alpha L_2$, which motivates three numerical schemes to minimize  $L_1/L_2$. We interpret the proposed schemes in line with some existing approaches in \Cref{sec:connection}, followed by convergence analysis  in \Cref{sec:convergence}.  We conduct extensive experiments in \Cref{sect:experiments} to demonstrate the performance of the $L_1/L_2 $ model  with three minimizing algorithms over  state-of-the-art methods in sparse recovery.
	\Cref{sect:discussion} presents how the classic $L_1$ approach behaves under different dynamic ranges and how sparsity, coherence, and dynamic range interplay on sparse recovery. 
	Finally, conclusions and future works are given in \Cref{sect:conclusion}.

	\section{Numerical schemes}\label{sect:model}
	
	We establish in \Cref{prop:relation} a link between the constrained $L_1/L_2 $ formulation \eqref{equ:ratio} and $L_1$-$\alpha L_2$, where $\alpha$ is a positive parameter. Immediately following this proposition, we develop a numerical algorithm for minimizing the ratio model. We further discuss two accelerated approaches in \Cref{sec:Adaptive}. % and the convergence analysis is present in \Cref{sec:convergence}.
	
	\begin{prop}\label{prop:relation}
		Denote 
		\begin{equation}\label{eq:l1dl2_alpha}
		\alpha^* := \inf\limits_{\h x \in \mathds{R}^n} \left\lbrace \frac{\|\h x\|_1}{\|\h x\|_2} \ \ \mathrm{ s.t. } \ A \h x =\h b \right\rbrace,
		\end{equation}
		and
		\begin{equation} \label{eq:l1dl2sub_dca}
		T(\alpha)  :=  \inf_{\h x \in \mathds{R}^n} \left\lbrace \|\h x\|_1 -\alpha\|\h x\|_2 \ \  \mathrm{ s.t. } \ A \h x = \h b \right\rbrace,
		%\\
		%\alpha^\ast & = \min_{x \in \Rbb^n} \left\lbrace \frac{\|\h x\|_1}{\|\h x\|_2} \,\middle| \, A \h x = \h b \right\rbrace,
		\end{equation}
	then	we have 
		\begin{enumerate}
			\item[(a)] if $ T(\alpha) < 0$,  then $\alpha>\alpha^\ast$;
			\item[(b)] if $ T(\alpha) \geq 0$,  then $\alpha\leq\alpha^\ast$;
			\item[(c)] if $T(\alpha) =0$,  then $\alpha=\alpha^\ast.$
		\end{enumerate}	
	
	\begin{proof}
	Denote the feasible set of~\eqref{eq:l1dl2_alpha} by $\mathbf{F} = \{ \h  x \mid A \h x=\h b \} $. Since $\h b \neq 0 $ then $\h 0 \notin \mathbf{F}$. 
\begin{enumerate}
			\item[(a)] If $T(\alpha) < 0 $, then there exists $\h x \in \mathbf{F} $ such that $\|\h x\|_1 - \alpha \|\h x\|_2 < 0 $, which implies that $\alpha > \frac{\|\h x\|_1}{\|\h x\|_2} $. Therefore, we have $\alpha > \alpha^{*} $.
			\item[(b)] If $T(\alpha)  \geq 0$, then for all $\h x \in \mathbf{F} $ we have $\|\h x\|_1 - \alpha \|\h x\|_2 \geq 0 $. So $\alpha \leq \frac{\|\h x\|_1}{\|\h x\|_2} $ and hence $\alpha \leq \inf\limits_{x \in \mathbf{F}}  \frac{\|\h x\|_1}{\|\h x \|_2}  = \alpha^* $, i.e., $\alpha \leq \alpha^* $.
			\item[(c)] If  $T(\alpha)  =  0,$ then by part (b) we get $\alpha \leq \alpha^* $. Furthermore, there exists a sequence $\{ \h x_n\} \subset \mathbf{F} $ such that $\lim\limits_{n\to\infty}  \left( \|\h x_n\|_1-\alpha \|\h x_n\|_2 \right) = 0$.  Since $\h x_n \in \mathbf{F}$, we have $\|\h b\| = \|A\h x_n\| \leq \|A\|\|\h x_n\| $.  Hence, $\{\h x_n\}$ has a lower bounded, i.e.  $\|\h x_n\| \geq \|\h b\|/\|A\|$ for all $ n$, then we get
			$\lim\limits_{n\to\infty} \left( \|\h x_n\|_1-\alpha \|\h x_n\|_2 \right)/\|\h x_n\|_2 = 0$, which means
			$\alpha^* \leq \lim\limits_{n\rightarrow \infty}  \|\h x_n\|_1/\|\h x_n\|_2 = \alpha$. Therefore, we have $\alpha = \alpha^* $.
	\end{enumerate}
\end{proof}
	\end{prop}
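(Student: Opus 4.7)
The plan is to treat each of the three implications separately, exploiting the homogeneity of $\|\h x\|_1/\|\h x\|_2$ and the fact that the feasible set $\mathbf{F}=\{\h x : A\h x=\h b\}$ excludes $\h 0$ because $\h b\neq 0$. Parts (a) and (b) should be almost immediate from unpacking the definitions of $T(\alpha)$ and $\alpha^*$, so the bulk of the work will be in the reverse inequality in part (c).

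For part (a), I would observe that $T(\alpha)<0$ forces the existence of at least one feasible $\h x$ satisfying $\|\h x\|_1-\alpha\|\h x\|_2<0$. Dividing by $\|\h x\|_2>0$ yields $\alpha>\|\h x\|_1/\|\h x\|_2\geq \alpha^*$, hence the strict inequality $\alpha>\alpha^*$. For part (b), a nonnegative infimum means $\|\h x\|_1-\alpha\|\h x\|_2\geq 0$ for every feasible $\h x$, so $\alpha\leq \|\h x\|_1/\|\h x\|_2$ uniformly over $\mathbf{F}$, and taking the infimum on the right gives $\alpha\leq \alpha^*$.

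The substantive step is part (c). The direction $\alpha\leq\alpha^*$ is already supplied by (b) applied to $T(\alpha)=0$. For $\alpha\geq \alpha^*$, the natural move is to pick a minimizing sequence $\{\h x_n\}\subset\mathbf{F}$ with $\|\h x_n\|_1-\alpha\|\h x_n\|_2\to 0$ and then divide through by $\|\h x_n\|_2$ to convert the difference into the ratio. The obstacle here is that this manipulation is only meaningful if $\|\h x_n\|_2$ is bounded away from zero; I anticipate this is the only delicate point. I would close that gap by using $A\h x_n=\h b$ to get $\|\h b\|\leq \|A\|\,\|\h x_n\|_2$, yielding the uniform lower bound $\|\h x_n\|_2\geq \|\h b\|/\|A\|>0$ since $\h b\neq 0$ and $A$ has full row rank. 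Dividing the convergent difference by this bounded-below quantity then gives $\|\h x_n\|_1/\|\h x_n\|_2\to \alpha$, from which $\alpha^*\leq \alpha$ follows, and combined with (b) we conclude $\alpha=\alpha^*$.

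Overall, the only real subtlety is ensuring one may legitimately pass from the additive form $\|\h x\|_1-\alpha\|\h x\|_2$ to the ratio $\|\h x\|_1/\|\h x\|_2$ along the minimizing sequence; everything else is bookkeeping with infima and feasibility.
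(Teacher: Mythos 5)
Your proposal is correct and follows essentially the same route as the paper's own proof: parts (a) and (b) by unpacking the definitions, and part (c) via a minimizing sequence together with the uniform lower bound $\|\h x_n\|_2 \geq \|\h b\|/\|A\| > 0$ coming from $A\h x_n = \h b$, which justifies passing from the difference to the ratio. No gaps.
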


	\subsection{Bisection Search}\label{sec:BS}
	
	It follows from \Cref{prop:relation} that the optimal value of $L_1/L_2$ equals to the value of $\alpha$ in the $L_1$-$\alpha L_2$ model if the objective value of $L_1$-$\alpha L_2$ is zero. That is to say, the optimal value of the ratio model is the root of $T(\alpha)$, which can be obtained by \textit{bisection search}. 
	Moreover, we have upper/lower bounds   of $\alpha$, i.e.,  $\alpha\in[1,\sqrt n]$, since $
		\| \h x \|_2 \leq \|\h x\|_1 \leq \sqrt{n} \|\h x\|_2, \ \forall \h x\in\mathds{R}^n$  \cite{golub1996matrix}.
	The procedure goes as follows: we start with an initial range of $\alpha$ to be $\lbrack 1, \ \sqrt{n} \rbrack$ and an initial value of $\alpha^{(0)}$ in between. Then using this $\alpha^{(0)}$,  we  solve for the $L_1$-$\alpha^{(0)} L_2$ minimization via the difference-of-convex algorithm (DCA) \cite{louOX15};  more details on the DCA implementation will be given in \Cref{sec:Adaptive}. Based on the objective value of $T(\alpha^{(0)})$, we update the range of $\alpha$. Specifically if $T(\alpha^{(0)}) = 0$, then we find the minimum ratio and the corresponding minimizer $\h x^\ast$ in the $L_1$-$L_2$ model is also the minimizer of the $L_1/L_2$ model. If $T(\alpha^{(0)
	})>0$, then we update the range as $\lbrack \alpha^{(0)}, \ \sqrt{n}\rbrack.$   If $T(\alpha^{(0)})<0$, then the minimum ratio is smaller than $\alpha^{(0)}$, so we can shorten the range from  $\lbrack 1, \ \sqrt{n} \rbrack$ to $\lbrack 1, \ \alpha^{(0)} \rbrack. $ We can further shorten the internal as $\left\lbrack 1, \ \frac{\|\h x^{(k+1)}\|_1}{\|\h x^{(k+1)}\|_2}  \right\rbrack,$ as the objective value of $L_1$-$\frac{\|\h x^{(k+1)}\|_1}{\|\h x^{(k+1)}\|_2} L_2$ would be less than or equal to zero in the next iteration.    
	After the range is updated, we choose $\alpha^{(1)}$ using the middle point of  two end points and iterate.

	We summarize the entire process as Algorithm~\ref{alg:l1dl2DCA}, in which the stopping criterion is that the error between two adjacent $\alpha$ values  is small enough. As the algorithmic scheme  follows directly from bisection search, we refer the algorithm  as $L_1/L_2$-BS or BS if the context is clear. 
	The convergence of BS can be obtained in the same way that the bisection method converges. However, due to the nonconvex nature of the $L_1$-$\alpha L_2$ minimization \eqref{eq:l1dl2sub_dca}, there is no guarantee to find its global minimizer and hence the solution to \eqref{eq:l1dl2_alpha} may  be suboptimal.

	\begin{algorithm}
		\caption{The $L_1/L_2$ minimization via bisection search ($L_1/L_2$-BS). }
		\label{alg:l1dl2DCA}
		\begin{algorithmic}[1]
			\STATE{Input: $A\in \mathds{R}^{m\times n}, \h b\in \mathds{R}^{m}$, kMax,  and  $\epsilon \in \mathds{R}$}
			\STATE{Initialize: $ {\h  x}^{(0)},\alpha^{(0)}$, $lb = 1 $, $ub = \sqrt{n}$  and $k = 0$}
			
			\WHILE{$k < $ kMax or $|\h \alpha^{(k)}-\h \alpha^{(k-1)}| > \epsilon$}
			\STATE{$\h x^{(k+1)} = \arg\min\limits_{\h x \in \mathds{R}^n} \left\lbrace \|\h x\|_1 -\alpha^{(k)}\|\h x\|_2 \, \  \mathrm{s.t.} \, A \h x =\h  b \right\rbrace  $}
			\IF{$ \|\h x^{(k+1)}\|_1 -\alpha^{(k)}\|\h x^{(k+1)}\|_2 < 0$}
			\STATE $ub = \frac{\|\h x^{(k+1)}\|_1}{\|\h x^{(k+1)}\|_2} $
			\ELSIF{$ \|\h x^{(k+1)}\|_1 -\alpha^{(k)}\|\h x^{(k+1)}\|_2 > 0$}
			\STATE $lb = \alpha^{(k)} $
			\ELSE
			\STATE break
			\ENDIF
			
			$\alpha^{(k+1)} = \frac{ub + lb}{2} $
			\STATE{$k = k+1$}
			\ENDWHILE
			\RETURN $\h x^{(k)}$ \end{algorithmic} 
	\end{algorithm}

	\subsection{Adaptive  Algorithms}\label{sec:Adaptive}
	
	The BS algorithm  is computationally expensive, considering that the $L_1$-$\alpha L_2$ minimization is conducted for multiple times. To speed up, we discuss two variants of $L_1/L_2$-BS by updating  the parameter $\alpha$ iteratively while minimizing $\|\h x\|_1 - \alpha \|\h x\|_2$. 
	
	Following  the DCA framework \cite{TA98,phamLe2005dc} to minimize $\|\h x\|_1 - \alpha \|\h x\|_2$, we consider the objective function as the difference of two convex functions, i.e.,
	$
	\min\limits_{\h x \in \mathds{R}^n} g(\h x) - h(\h x).
	$
	By linearizing the second term $h(\cdot)$, the DCA iterates as follows,
	\begin{equation}\label{eq:DCA}
	\h x^{(k+1)} = \arg\min\limits_{\mathbf{x}\in \mathds{R}^n} g(\h x) - \left\langle \h  x, \nabla h(\h x^{(k)}) \right\rangle.
	\end{equation} 
	Particularly for the $L_1$-$\alpha L_2$ model, we have
	\begin{equation}\label{equ:g}
	g(\h x) = \| \h x\|_1 + I(A\h x -b) \quad \text{ and } \quad h(\h x ) = \alpha \|\h x\|_2,
	\end{equation}
	thus leading to the DCA update as 
	\begin{equation}
	\label{equ:DCA}
	\h x^{(k+1)} = \arg\min\limits_{\mathbf{x}\in \mathds{R}^n} g(\h x) - \left\langle \h  x, \frac{\alpha\h x^{(k)}}{\|\h x^{(k)}\|_2} \right\rangle.	
	\end{equation}

	Now we consider to update $\alpha$ iteratively by the ratio of the current solution, leading to the following scheme,
	\begin{equation}\label{equ:a1}
	\begin{cases}
	\h x^{(k+1)}  = \arg\min\limits_{\h x} \left\{g(\h x)  - \left\langle \h x,  \frac{\alpha^{(k)}\h x^{(k)}}{\|\h x^{(k)}\|_2} \right\rangle   \right\},\\
	\alpha^{(k+1)}  = \|\h x^{(k+1)}\|_1/\|\h x^{(k+1)}\|_2,
	\end{cases}
	\end{equation} 
	where $g$ is defined in \eqref{equ:g}. 
	Notice that the $\h x$-subproblem in \eqref{equ:a1} is a linear programming (LP) problem, which unfortunately has no guarantee that the optimal solution exists (as the problem can be unbounded). 
	To increase the robustness of the algorithm, we further incorporate a quadratic term into the linear problem, i.e., 
	\begin{equation}\label{equ:a2}
	\begin{cases}
	\h x^{(k+1)}  = \arg\min\limits_{\h x}\left\{ g(\h x)  - \left\langle \h x,  \frac{\alpha^{(k)}\h x^{(k)}}{\|\h x^{(k)}\|_2} \right\rangle + \frac{\beta}{2}\|\h x -  \h x^{(k)}\|_2^2 \right\},\\
	%& \quad \quad \quad \mathrm{ s.t. } \  A \h x =\h  b \\
	\alpha^{(k+1)} = \|\h x^{(k+1)}\|_1/\|\h x^{(k+1)}\|_2.
	\end{cases}
	\end{equation}

	We denote these two adaptive methods \eqref{equ:a1} and \eqref{equ:a2} as $L_1/L_2$-A1 and $L_1/L_2$-A2,  respectively or A1 and A2 for short. Both algorithms are  summarized in \Cref{alg:adaptive}.

	For the $\h x$ subproblem of $L_1/L_2$-A1, we convert it into an LP problem. Assume that $ \h x = \h x^+ -\h  x^- $ where $\h  x^+ \geq \h 0 $ and $ \h x^- \geq \h 0.$ Denote $ \bar{\h x} = \begin{bmatrix}\h x^+\\ \h x^-\end{bmatrix},$ then $ A\h x = \h b $ becomes $ \bar{A} \bar{\h x} = \h b$ with $ \bar{A} = \begin{bmatrix} A & -A \end{bmatrix} $. 
	Therefore, the $\h x$-subproblem becomes
	\begin{equation}
	\min_{\bar{\h x} \geq \h 0} \mathbf{c}^{T}\bar{\h x} \quad s.t. \quad \bar{A} \bar{\h x} = \h b,
	\end{equation}
	where $\mathbf{c} = \left[\mathbf{1}+\frac{\alpha^{(k)}\h x^{(k)}}{\|\h x^{(k)}\|_2}; \mathbf{1}-\frac{\alpha^{(k)}\h x^{(k)}}{\|\h x^{(k)}\|_2}\right]$.
	We adopt the software Gurobi \cite{optimization2014inc} to solve this LP problem.

	The $\h x$ subproblem of $L_1/L_2$-A2 is a quadratic programming problem, which can be solved via ADMM. By introducing an auxiliary variable $\h y$, we have  the augmented Lagrangian,
	\begin{equation}
	\begin{split}L_\rho(\h x,\h y;\h u)  &= \textstyle \|\h y\|_1 +  I(A\h x-\h b)  - \left\langle \h x,  \frac{\alpha^{(k)}\h x^{(k)}}{\|\h x^{(k)}\|_2} \right\rangle \\  & \textstyle + \frac{\beta}{2}\|\h x -\h x^{(k)}\|_2^2 + \h u^T(\h x-\h y)+\frac \rho 2\|\h x-\h y\|_2^2. 
	\end{split}
	\end{equation}
	Then the ADMM iteration goes as follows
	\begin{equation}\label{eq:L1ADMM}
	\begin{cases}
	 \h x_{j+1}  = \arg\min\limits_{\h x}L_\rho(\h x,\h y_{j};\h u_{j}),\\
	\h y_{j+1} = 
	\arg\min\limits_{\h y}L_\rho(\h x_{j+1},\h y;\h u_{j}) ,\\
	\h u_{j+1} = \h u_{j}+\rho\left(\h x_{j+1}-\h y_{j+1}\right),
	\end{cases}
	\end{equation}
	%{\color{red}[what I had mind is that we  use subscript, i.e., $\h x_j, \h y_j, \h u_j$. check the rest of the paper.]}
	where the subscript $j$ indexes the inner loop, as opposed to the superscript $k$ for outer iterations used in \eqref{equ:a2}. The $\h x$-subproblem of \eqref{eq:L1ADMM}  is a projection problem to minimize 
	$$\left\|\h x -\frac{\beta \h x^{(k)}- \h u_{j} + \rho \h y_{j}+\frac{\alpha^{(k)}\h x^{(k)}}{\|\h x^{(k)}\|_2}}{\beta +\rho}\right\|_2^2 ,
	$$ under the constraint of $A\h x=\h b $. Since the closed-form solution of projecting a vector $\h z$ to this constraint is 
	\begin{equation}
	\label{equ:projection}
	\mathbf{proj}(\h z ) = \h z - A^T(A A^T)^{-1}\h (A\h z -\h b),
	\end{equation}
 the $\h x$-update  is given by
	\[
	\h x_{j+1} = \mathbf{proj}\left(\frac{\beta \h x^{(k)}- \h u_{j}+ \rho \h y_{j}+\frac{\alpha^{(k)}\h x^{(k)}}{\|\h x^{(k)}\|_2}}{\beta +\rho}\right).
	\]
	The $\h y$-subproblem of \eqref{eq:L1ADMM} is equivalent to
	\begin{equation*}
	\h y_{j+1} =  \arg \min\limits_{\h y} \left\{\|\h y\|_1+\frac \rho 2 \left\|\h y-\h x_{j+1}-\frac {\h u_{j}} \rho\right\|_2^2\right\}.
	\end{equation*}
	It has a closed-form solution via \textit{soft shrinkage}, i.e.,
	\begin{equation}
	\textstyle \h y_{j+1} = \mathbf{shrink}\left(\h x_{j+1}+\frac {\h u_{j}} \rho, \frac 1 \rho\right),
	\end{equation} 
	with $ \mathbf{shrink}(\h v, \mu) = \mathrm{sign}(\h v)\max\left(|\h v|-\mu, 0\right).
$
	%for $i = 1, 2, \cdots, n.$
	
	\begin{algorithm}[t]
		\caption{The $L_1/L_2$ minimization via adaptive selection method ($L_1/L_2$-A1 or A2). }
		\label{alg:adaptive}
		\begin{algorithmic}[1]
			\STATE{Input: $A\in \mathds{R}^{m\times n}, \h b\in \mathds{R}^{m}$, kMax, and $ \epsilon \in \mathds{R}$}
			\STATE{initialization: $ {\h  x}^{(0)}, \alpha^{(0)}$  and $k = 1$}
			\WHILE{$k < $ kMax or $\|\h x^{(k)}-\h x^{(k-1)}\|_2/\|\h x^{(k)}\| > \epsilon$}
			\STATE\label{line3}{
				\begin{equation*}
				\begin{cases}
				\text{Update } \{\h x^{(k+1)}, \alpha^{(k+1)} \}\text{ by } \eqref{equ:a1} & \text{ for A1}\\
				\text{Update } \{\h x^{(k+1)}, \alpha^{(k+1)}\} \text{ by } \eqref{equ:a2} & \text{ for A2}\\
				
				\end{cases}
				\end{equation*}
			}
			\STATE{$k = k+1$}
			\ENDWHILE
			\RETURN $\h x^{(k)}$ \end{algorithmic} 
	\end{algorithm}

	\section{Connections to previous works}\label{sec:connection}
	
	We try to interpret the proposed adaptive methods (A1 and A2)  in line with some existing approaches: parameter selection, generalized inverse power, and gradient-based methods. Our efforts  contribute to convergence analysis in \Cref{sec:convergence}. 
	
	\subsection{Parameter Selection}
	
	Recall that in $L_1/L_2$-BS, the ratio $L_1/L_2$ is minimized  when there exists a proper $\alpha^\ast$ such that $\|\h x^\ast\|_1 - \alpha^\ast \|\h x^\ast\|_2 = 0$  with $\h x^\ast = \arg \min\limits_{\h x } \left\{ \|\h x\|_1 - \alpha^\ast \|\h x\|_2 \ \mathrm{s.t.} \ A\h x = \h b \right\}$. We can regard this process as a root-finding problem for $\alpha^\ast$, which often occurs in parameter selection. For example, in the discrepancy principle method \cite{dp2012methods,youwei2012parameter,Idivergence2013minimization}, one aims to find a parameter $\alpha$ such that 
	the resulting data-fitting term is close to the noise level.
	In particular, we  represent this process by
	\begin{equation}\label{eqn:para_2loops}
	\begin{cases}
	\h x^{(k+1)} = \arg\min\limits_{\h x} f(\h x, \alpha^{(k)}),\\
	\h \alpha^{(k+1)}  =  l(\h x^{(k+1)}, \alpha^{(k)}),
	\end{cases}
	\end{equation} 
	where  $f(\cdot)$ is a general objective function to be minimized and $l(\cdot)$ is a certain scheme to update $\alpha$ so that discrepancy principle holds.	 
	Typically, an inner loop is required to find the solution of $\h x$-subproblem, followed by updating this parameter in an outer iteration.  We further present
	the $j$-th inner iteration at the $k$-th outer iteration by 
	\begin{equation}\label{eq:para_inner/outer}
	\h x_{j+1} = \Psi (\h x_j, \alpha^{(k)}),
	\end{equation}
	for the $\h x$-subproblem in \eqref{eqn:para_2loops}.
	
	To speed-up the process, Wen and Chan  \cite{youwei2012parameter} proposed an adaptive scheme that updates the parameter during the inner loop  such that it renders the current data-fitting term equal to the noise level. In other words, instead of updating $\alpha$ after minimizing $f$, they directly iterated 
	\begin{equation}\label{equ:para_scheme}
	\h x_{j+1} = \Psi(\h x_j, \alpha_{j+1}),
	\end{equation}
	in a way that $\{\h x_{j+1}, \alpha_{j+1}\}$ satisfies the discrepancy principle. 	In this way, only one loop is needed as opposed to inner/outer loops in \eqref{eq:para_inner/outer}. But it requires a closed-form solution for $\h x_{j+1} $ so one can perform a one-dimensional search for $\alpha_{j+1}$.

	The proposed BS scheme falls into the framework of \eqref{eqn:para_2loops} in that  the searching range of parameter is shorten every outer iteration. However, 
 $f$ in our BS method is the $L_1$-$\alpha L_2$ minimization that does not have a closed-form solution. 
As opposed to \eqref{equ:para_scheme}, we consider to update
\begin{equation}\label{equ:para_schem_ours}
		\h x_{j+1} = \Psi(\h x_j,\alpha_{j})
\end{equation} 
prior to updating $\alpha$. In other word, 
we update $\h x_{j+1}$ based on $\alpha_{j}$ rather than $\alpha_{j+1}$, the latter of which was adopted in the parameter-selection method \cite{youwei2012parameter}.
The rationale of \eqref{equ:para_schem_ours} is to guarantee that $\{\h x_{j+1}, \alpha_{j+1}\}$ satisfies $\|\h x_{j+1}\|_1- \alpha_{j+1}\|\h x_{j+1}\|_1 = 0$.  The iterative scheme \eqref{equ:para_schem_ours} is consistent with A1 or A2 (depending on the form of $\Psi$), if we change the notation from subscript $j$ to superscript $k$.

	\subsection{Generalized Inverse Power Methods}
	%\tb{\emph{I reread the generalized inverse power paper and really the paper does not give a clear view on the connection with inverse power method in eigenvalue problem. For our safety, I rewrite as below.  }}
	%{\color{red} [I rewrite a little bit; please check]}
	A standard technique to find the smallest eigenvalue of a  positive semi-definite symmetric matrix $B$ is the inverse power method \cite{golub1996matrix} that requires to iteratively solve 
	the  linear system,
	\begin{equation}
	\label{equ:invere_p}
	B\h x^{(k+1)}  =   \h x^{(k)}.
	\end{equation}
	The iteration converges to the smallest eigenvector of $B$, denoted by $\h x^\ast$. Then the smallest eigenvalue can be evaluated  by  $\lambda = q(\h x^\ast)$, where $q(\cdot)$ is
	Rayleigh quotient defined as $$q(\h x ) = \frac{\langle \h x, B \h x\rangle}{\|\h x\|_2^2}. $$
	Note that \eqref{equ:invere_p} is equivalent to the minimization problem
	\begin{equation}\label{equ:min_inverse_p}
	\h x^{(k+1)}=  \arg\min_{\h x }\left\{ \frac{1}{2}\langle \h x, B \h x\rangle -  \langle \h x^{(k)}, \h x\rangle \right\}. 
	\end{equation}

	It is well known in linear algebra \cite{golub1996matrix,trefethen1997numerical} that eigenvectors of $B$ are critical points of $\min\limits_{\h x} q(\h x)$ and the smallest eigenvalue/eigenvector can be found by \eqref{equ:min_inverse_p}. This idea is naturally extended to the nonlinear case in  \cite{IP2010inverse}, where a general quotient is considered,
	$
	q(\h x ) =  \frac{r(\h x)}{s(\h x)},
$
	with arbitrary functions $r(\cdot)$ and $s(\cdot)$. Similarly to \eqref{equ:min_inverse_p}, we have the corresponding scheme
	\begin{equation*}
	\h x^{(k+1)} =  \arg\min\limits_{\h x}\left\{ r(\h x) -  \langle \nabla s(\h x^{(k)}), \h x\rangle \right\}.
	\end{equation*}
 Following \cite{IP2010inverse}, we consider to update the eigenvalue $\lambda^{(k)} $ at each iteration to guarantee the algorithm's descent. In particular, the iterative scheme is  given by
		\begin{equation}\label{eqn:gIPM}
		\begin{cases}
		\h x^{(k+1)}  =  \arg\min\limits_{\h x }\left\{ r(\h x) -  \lambda^{(k)} \langle \nabla s(\h x^{(k)}), \h x\rangle \right\},\\
		\lambda^{(k+1)} =  \frac{r(\h x^{(k+1)})}{s(\h x^{(k+1)})}. 
		\end{cases}
		\end{equation}
If we choose $r(\h x) =  g(\h  x), \ s(\h x) = \|\h x\|_2, $ and denote $\lambda$ as $\alpha$, then the generalized inverse power method \eqref{eqn:gIPM} is $L_1/L_2$-A1.  
	In \cite{bresson2012convergence}, a modified inverse power method was proposed via the steepest descent flow. The iteration scheme is to incorporate a quadratic term in the objective function of the $\h x$-subproblem, which leads  to  $L_1/L_2$-A2. %when we use the same setting

	\subsection{Gradient-based Methods}\label{sec:FBS}
	
	\begin{definition}	A \textit{critical point} of  a constrained optimization problem is a vector in the feasible set (satisfying the constraints) that is also a local maximum, minimum, or saddle point of the objective function.
	\end{definition}

	According to Karush-Kuhn-Tucker (KKT) conditions,  $\h x^\ast\neq \h 0$ is a critical point of \eqref{equ:ratio} if and only if there exists a vector $\h s$ such that 
	\begin{equation}
	\left\{\begin{array}{l}
	0  \in \frac{\partial \|\h x^\ast\|_1}{\|\h x^\ast\|_2} - \frac{\|\h x^\ast\|_1}{\|\h x^\ast\|_2^2} \frac{\h x^\ast }{\|\h x^\ast\|_2} + A^T \h s,	\\
	0  = A \h x^\ast - \h b .
	\end{array}\right.
	\end{equation}
	By introducing $
	\hat {\h s}  = \|\h x^\ast\|_2 \h \cdot \h s$,
	we have 
	\begin{equation}\label{equ:optimality}
	\left\{\begin{array}{l}
	0  \in \partial \|\h x^\ast\|_1- \frac{\|\h x^\ast\|_1}{\|\h x^\ast\|_2} \frac{\h x^\ast }{\|\h x^\ast\|_2} + A^T \hat{\h s}, 	\\
	0  = A \h x^\ast - \h b .
	\end{array}\right.
	\end{equation}	
	The  condition \eqref{equ:optimality} is also an optimality condition to another optimization problem:
	\begin{equation}\label{equ:reformulation}
	\min_{\h x} g(\h x) +w(\h x) ,
	\end{equation}
	where $g(\h x) $ is from \eqref{equ:g} and $w(\h x)$ is some function satisfying
	\begin{equation}\label{equ:F}
	\nabla w(\h x) = -\frac{\|\h x\|_1}{\|\h x\|_2} \frac{\h x}{\|\h x\|_2}. 
	\end{equation}
	Note that $w(\cdot)$ can not be explicitly determined from \eqref{equ:F}.

By applying a proximal gradient method (PGM) \cite{parikh2014proximal,fukushima1981generalized,combettes2011proximal} on the model \eqref{equ:reformulation}, we obtain the following  scheme
\begin{equation}\label{eq:G}
	\h x^{(k+1)} = \mathbf{prox}_{\frac{1}{\beta}g}\left(\h x^{(k)} - \frac{1}{\beta}\nabla w(\h x^{(k)}) \right),
\end{equation}
where $\mathbf{prox}_g(\h y ) = \arg\min\limits_{\h z}\left\{ g(\h z)+\frac{1}{2}\|\h z - \h y\|_2^2\right\}. $ This iterative scheme is 
the same as $L_1/L_2$-A2.

	As for $L_1/L_2$-A1, we can interpret it as a generalized conditional gradient method \cite{bredies2009generalized} that minimizes $g(\h x)+ w(\h x)$ by $\h x^{(k+1)}  = \min\limits_{\h y} \langle \nabla w(\h x^{(k)}), \h y\rangle + g(\h y). $

	\section{Convergence analysis}\label{sec:convergence}

	Following the discussion in \Cref{sec:FBS}, we present the convergence analysis. 
	We start with the convergence of A2, which is characterized in \Cref{thm:convergence}. To prove it, we need four lemmas, whose proofs are given in Appendix.	
	
		\begin{lemma}(Sufficient decreasing)\label{lema:decreasing}
		The sequence $\{\h x^{(k)},\alpha^{(k)}\}$ produced by $L_1/L_2$-A2 satisfies 
		$$  \alpha^{(k)}- \alpha^{(k+1)} \geq \frac{\beta}{\tr{2}\| \h x^{(k+1)}\|_2} \|\h x^{(k+1)} - \h x^{(k)}\|_2^2, \quad \forall k>0.$$
		%	for all $k>0$. 
	\end{lemma}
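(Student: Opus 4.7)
The plan is to exploit the optimality of $\h x^{(k+1)}$ in the proximal subproblem by testing it against the feasible competitor $\h x^{(k)}$. First, I would write out the defining inequality for $\h x^{(k+1)}$ as a minimizer in \eqref{equ:a2}, namely
\begin{equation*}
g(\h x^{(k+1)}) - \left\langle \h x^{(k+1)}, \frac{\alpha^{(k)}\h x^{(k)}}{\|\h x^{(k)}\|_2}\right\rangle + \frac{\beta}{2}\|\h x^{(k+1)}-\h x^{(k)}\|_2^2 \le g(\h x^{(k)}) - \alpha^{(k)}\|\h x^{(k)}\|_2.
\end{equation*}
Since both iterates lie in the feasible set $\{A\h x = \h b\}$, the indicator $I(A\h x-\h b)$ vanishes at both points, so $g$ reduces to $\|\cdot\|_1$ on each side.

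Next I would use the definition of the adaptive update $\alpha^{(k)} = \|\h x^{(k)}\|_1/\|\h x^{(k)}\|_2$, which gives $\|\h x^{(k)}\|_1 - \alpha^{(k)}\|\h x^{(k)}\|_2 = 0$. Thus the right-hand side of the inequality above is zero, leaving
\begin{equation*}
\|\h x^{(k+1)}\|_1 + \frac{\beta}{2}\|\h x^{(k+1)}-\h x^{(k)}\|_2^2 \le \left\langle \h x^{(k+1)}, \frac{\alpha^{(k)}\h x^{(k)}}{\|\h x^{(k)}\|_2}\right\rangle.
\end{equation*}
Applying Cauchy--Schwarz to the inner product bounds it above by $\alpha^{(k)}\|\h x^{(k+1)}\|_2$. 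Rearranging yields
\begin{equation*}
\alpha^{(k)}\|\h x^{(k+1)}\|_2 - \|\h x^{(k+1)}\|_1 \ge \frac{\beta}{2}\|\h x^{(k+1)}-\h x^{(k)}\|_2^2.
\end{equation*}

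Finally, dividing both sides by $\|\h x^{(k+1)}\|_2$ (which is positive since $\h b\ne 0$ forces $\h x^{(k+1)}\ne \h 0$) and identifying the ratio $\|\h x^{(k+1)}\|_1/\|\h x^{(k+1)}\|_2$ as $\alpha^{(k+1)}$ delivers the claim. I do not anticipate a genuine obstacle: the one subtlety is the standing assumption $\alpha^{(k)}\|\h x^{(k)}\|_2 = \|\h x^{(k)}\|_1$, which holds for every $k\ge 1$ by the adaptive update and can be assumed for $k=0$ by initializing $\alpha^{(0)}$ accordingly. The proof is essentially a ``three-point'' sufficient-decrease estimate for proximal-gradient-style iterations, specialized to the Rayleigh-type quotient $\|\cdot\|_1/\|\cdot\|_2$.
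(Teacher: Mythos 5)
Your proposal is correct and follows essentially the same route as the paper: test the optimality of $\h x^{(k+1)}$ against $\h x^{(k)}$ in the subproblem \eqref{equ:a2}, use $\alpha^{(k)}\|\h x^{(k)}\|_2=\|\h x^{(k)}\|_1$, bound the inner product by $\alpha^{(k)}\|\h x^{(k+1)}\|_2$ (your Cauchy--Schwarz step is the same inequality the paper phrases as convexity of the Euclidean norm), and divide by $\|\h x^{(k+1)}\|_2>0$. No gaps.
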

	
	The next two lemmas (\Cref{lema_lipschitz} and \Cref{F_lipschitz}) discuss the Lipschitz properties. 

	\begin{lemma}\label{lema_lipschitz}
		Define $L = \frac{1}{\|A^T(A A^T)^{-1}\h b\|_2}$. Then for any $\h x, \h y \in \mathds{R}^n$ satisfying  $A \h x = A \h y = \h b$,  we have
		\begin{equation*}
		\left\|\frac{\h x}{\|\h x\|_2} -  \frac{\h y}{\|\h y\|_2}\right\|_2 \leq L\|\h x - \h y\|_2.
		\end{equation*}
	\end{lemma}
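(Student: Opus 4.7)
The plan rests on two observations. First, under the paper's standing assumption that $A$ has full row rank, the vector $A^T(AA^T)^{-1}\h b$ is precisely the minimum-$\ell_2$-norm solution of $A\h z = \h b$. Consequently, every $\h z$ in the affine feasible set $\mathbf{F} = \{\h z : A\h z = \h b\}$ satisfies $\|\h z\|_2 \geq \|A^T(AA^T)^{-1}\h b\|_2 = 1/L$. Second, $\mathbf{F}$ is convex (indeed affine), so the entire line segment $\h x_t := (1-t)\h x + t\h y$ lies in $\mathbf{F}$ for $t \in [0,1]$, and therefore $\|\h x_t\|_2 \geq 1/L$ uniformly in $t$. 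This uniform lower bound is the engine driving the estimate.

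Given these observations, I would view the left-hand side of the lemma as $\|f(1) - f(0)\|_2$ for the curve $f(t) := \h x_t / \|\h x_t\|_2$, and bound it by $\int_0^1 \|f'(t)\|_2\,dt$. A direct differentiation gives
$$f'(t) = \frac{1}{\|\h x_t\|_2}\Bigl(I - \frac{\h x_t \h x_t^T}{\|\h x_t\|_2^2}\Bigr)(\h y - \h x),$$
where the parenthesized operator is the orthogonal projection onto the hyperplane perpendicular to $\h x_t$ and hence has operator norm at most one. Combined with the uniform bound $\|\h x_t\|_2 \geq 1/L$, this yields the pointwise estimate $\|f'(t)\|_2 \leq \|\h y - \h x\|_2 / \|\h x_t\|_2 \leq L\|\h x - \h y\|_2$, and integrating over $[0,1]$ completes the argument.

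The main technical point is avoiding the spurious factor of two that the naive approach produces. If one simply writes
$$\frac{\h x}{\|\h x\|_2} - \frac{\h y}{\|\h y\|_2} = \frac{\h x - \h y}{\|\h x\|_2} + \h y\Bigl(\frac{1}{\|\h x\|_2} - \frac{1}{\|\h y\|_2}\Bigr)$$
and applies the triangle inequality together with the reverse triangle inequality on $|\,\|\h x\|_2 - \|\h y\|_2|$, one loses a factor of two and obtains the weaker bound $2L\|\h x-\h y\|_2$. Routing the estimate through the path integral, and exploiting the fact that $f'(t)$ factors through an orthogonal projection, is what preserves the optimal constant $L$ stated in the lemma. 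Everything else reduces to the standard minimum-norm characterization of the Moore--Penrose pseudoinverse solution and elementary calculus.
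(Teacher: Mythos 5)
Your argument is correct, but it is a genuinely different route from the paper's. You bound $\bigl\|\frac{\h x}{\|\h x\|_2}-\frac{\h y}{\|\h y\|_2}\bigr\|_2$ by integrating the derivative of $t\mapsto \h x_t/\|\h x_t\|_2$ along the feasible segment, using that the affine set $\{\h z: A\h z=\h b\}$ is convex so that $\|\h x_t\|_2\ge \|A^T(AA^T)^{-1}\h b\|_2=1/L$ holds uniformly in $t$, and that the derivative factors through the orthogonal projection $I-\h x_t\h x_t^T/\|\h x_t\|_2^2$, whose operator norm is at most one; all steps (the minimum-norm characterization, the differentiability since $\|\h x_t\|_2>0$, and the fundamental theorem of calculus) are sound. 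The paper instead proceeds purely algebraically: it expands $\bigl\|\frac{\h x}{\|\h x\|_2}-\frac{\h y}{\|\h y\|_2}\bigr\|_2^2 = 2 - \frac{2\langle \h x,\h y\rangle}{\|\h x\|_2\|\h y\|_2}$ and uses $2\|\h x\|_2\|\h y\|_2\le \|\h x\|_2^2+\|\h y\|_2^2$ to obtain the intermediate bound $\|\h x-\h y\|_2^2/(\|\h x\|_2\|\h y\|_2)$, then applies the same minimum-norm lower bound, but only at the two endpoints. So your claim that one must route through a path integral to avoid the factor of two is not quite right: that factor is an artifact of the particular triangle-inequality splitting you mention, and the paper's two-line identity preserves the constant $L$ with no calculus at all, and in fact proves the stronger statement that the normalization map is $1/r$-Lipschitz on any set (convex or not) where all norms are at least $r$. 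What your approach buys in exchange is a geometric picture (derivative through a projection, uniform bound along the path) that generalizes readily to other smooth normalizations, at the mild cost of actually needing convexity of the feasible set, which is of course available here.
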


	Since the gradient of the $L_2$ norm is $\nabla\|\h x \|_2  = \frac{\h x}{\|\h x\|_2}$, \Cref{lema_lipschitz} implies that
	the gradient of  Euclidean norm is Lipschitz-continuous in the domain $\{ \h x \ | \ A\h x = \h b\}$. 
	The next lemma is about the Lipschitz property for the implicit function $w(\cdot)$  that satisfies \eqref{equ:F}.
	
	\begin{lemma}\label{F_lipschitz}
Given	$L$ defined in \Cref{lema_lipschitz}.	For any $\h x, \h y \in \mathds{R}^n$ satisfying  $A \h x = A \h y = \h b$,  then
		\begin{equation}\label{eq:lemma3}
		\left\|\nabla w(\h x)- \nabla w(\h y)\right\|_2 \leq L_w\|\h x - \h y\|_2,
		\end{equation}
	for $w$ satisfying \eqref{equ:F} and $L_w = 2\sqrt{n}L$.
	\end{lemma}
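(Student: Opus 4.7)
The plan is to decompose $\nabla w$ into a product of a scalar piece and a unit-vector piece, and then handle each factor with a standard add-and-subtract argument using \Cref{lema_lipschitz}. Writing $a(\h x) := \|\h x\|_1/\|\h x\|_2$ and $\h u(\h x) := \h x/\|\h x\|_2$, the definition \eqref{equ:F} becomes
\begin{equation*}
\nabla w(\h x) = -a(\h x)\,\h u(\h x),
\end{equation*}
so that for any feasible $\h x,\h y$ (i.e.\ $A\h x=A\h y=\h b$) one has the identity
\begin{equation*}
\nabla w(\h x)-\nabla w(\h y) = -a(\h x)\bigl(\h u(\h x)-\h u(\h y)\bigr)-\bigl(a(\h x)-a(\h y)\bigr)\h u(\h y).
\end{equation*}

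Applying the triangle inequality and using $\|\h u(\h y)\|_2=1$, the bound reduces to controlling (i) $a(\h x)$, (ii) $\|\h u(\h x)-\h u(\h y)\|_2$, and (iii) $|a(\h x)-a(\h y)|$. Piece (i) is immediate from the classical norm equivalence $\|\h x\|_1\le\sqrt{n}\|\h x\|_2$, which gives $a(\h x)\le\sqrt{n}$. Piece (ii) is exactly the conclusion of \Cref{lema_lipschitz}, yielding $\|\h u(\h x)-\h u(\h y)\|_2\le L\|\h x-\h y\|_2$.

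For piece (iii), the key observation is that $a(\h x)=\|\h u(\h x)\|_1$, so by the reverse triangle inequality in $\ell_1$ and then the equivalence $\|\cdot\|_1\le\sqrt{n}\|\cdot\|_2$,
\begin{equation*}
|a(\h x)-a(\h y)|=\bigl|\|\h u(\h x)\|_1-\|\h u(\h y)\|_1\bigr|\le\|\h u(\h x)-\h u(\h y)\|_1\le\sqrt{n}\,\|\h u(\h x)-\h u(\h y)\|_2,
\end{equation*}
and \Cref{lema_lipschitz} again bounds the last factor by $L\|\h x-\h y\|_2$. Combining the three pieces gives a constant of $\sqrt{n}\cdot L+\sqrt{n}\cdot L\cdot 1 = 2\sqrt{n}L$, which is exactly $L_w$.

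The only substantive obstacle is piece (iii): one has to notice that the scalar $a(\h x)$ can itself be rewritten as the $\ell_1$ norm of the unit vector $\h u(\h x)$, which is what allows \Cref{lema_lipschitz} to be recycled rather than having to differentiate $a(\h x)$ directly (its gradient is more delicate because of the nonsmoothness of $\|\cdot\|_1$). Everything else is a routine triangle-inequality bookkeeping exercise that combines the two norm-equivalence factors of $\sqrt{n}$ with the single Lipschitz constant $L$ from \Cref{lema_lipschitz}.
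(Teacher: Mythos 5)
Your proof is correct, and it reaches the paper's constant $L_w=2\sqrt{n}L$ with the same overall flavor (add-and-subtract, triangle inequality, two uses of $\|\cdot\|_1\le\sqrt n\|\cdot\|_2$) but a different split of $\nabla w$. The paper writes $\nabla w(\h x)=-\frac{\|\h x\|_1}{\|\h x\|_2^2}\h x$ and inserts the cross term $\frac{\|\h x\|_1}{\|\h y\|_2^2}\h y$, which leaves it with the difference $\bigl\|\frac{\h x}{\|\h x\|_2^2}-\frac{\h y}{\|\h y\|_2^2}\bigr\|_2$; it evaluates this exactly as $\frac{\|\h x-\h y\|_2}{\|\h x\|_2\|\h y\|_2}$ and then invokes the feasibility lower bound $\|\h x\|_2\ge\|A^T(AA^T)^{-1}\h b\|_2=1/L$ (an ingredient from inside the proof of \Cref{lema_lipschitz}, not its statement) to absorb the denominators. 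You instead factor $\nabla w(\h x)=-a(\h x)\h u(\h x)$ with $a(\h x)=\|\h x\|_1/\|\h x\|_2$ and $\h u(\h x)=\h x/\|\h x\|_2$, and your two residual pieces are exactly the quantity controlled by the \emph{statement} of \Cref{lema_lipschitz}: once directly for $\|\h u(\h x)-\h u(\h y)\|_2$, and once more for $|a(\h x)-a(\h y)|$ via the nice observation $a(\h x)=\|\h u(\h x)\|_1$ together with the reverse triangle inequality. The payoff of your route is that \Cref{lema_lipschitz} is used as a black box twice and no identity or lower bound needs to be re-derived; the payoff of the paper's route is that it is self-contained at the level of elementary identities and makes the dependence on the feasible set's distance from the origin explicit. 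Both are tight to the same constant, so there is no gap either way.
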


	\begin{lemma}\label{lemma:G}
		Given $g(\cdot)$ defined in \eqref{equ:g} and suppose $w(\cdot)$ satisfies \eqref{equ:F}, we denote 
		\begin{equation}\label{equ:G}
		\textstyle \Phi(\h x ) := \beta \left(\h x - \mathrm{prox}_{\frac{1}{\beta}g}\Big(\h x -\frac 1 {\beta}\nabla w(\h x) \Big)\right),
		\end{equation}
		for an arbitrary $\beta>0$.	Then we have 
		\begin{enumerate}
			\item[(a)] $\Phi(\h x^\ast) = \h 0 $ if and only if $\h x^\ast$ is a critical point of \eqref{equ:ratio}; 
			\item[(b)] $\left\|\Phi(\h x)-\Phi(\h y) \right\|_2 \leq L_\Phi \|\h x- \h y\|_2$ with $L_\Phi = L_w+2\beta, $ for any $\h x, \h y \in \mathds{R}^n$ satisfying  $A \h x = A \h y = \h b$.
		\end{enumerate}
	\end{lemma}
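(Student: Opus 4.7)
My plan is to treat the two claims separately. Part (a) will follow from the first-order characterization of the proximal operator, and part (b) will combine the triangle inequality, non-expansiveness of $\mathrm{prox}_{\frac{1}{\beta}g}$, and \Cref{F_lipschitz}.

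For part (a), I would start by unpacking the definition \eqref{equ:G}: $\Phi(\h x^\ast)=\h 0$ is equivalent to the fixed-point equation
\[
\h x^\ast \;=\; \mathrm{prox}_{\frac{1}{\beta}g}\!\Bigl(\h x^\ast-\tfrac{1}{\beta}\nabla w(\h x^\ast)\Bigr).
\]
Using the standard equivalence $\h z=\mathrm{prox}_{\phi}(\h y)\Longleftrightarrow \h y-\h z\in\partial\phi(\h z)$ with $\phi=\tfrac{1}{\beta}g$, this reduces to $0\in\partial g(\h x^\ast)+\nabla w(\h x^\ast)$. Expanding $\partial g(\h x^\ast)=\partial\|\h x^\ast\|_1+A^T\mathds{R}^m$ (the sum rule applies because $A$ has full row rank, so the indicator and the $\ell_1$ norm satisfy the standard constraint qualification) and substituting \eqref{equ:F} for $\nabla w(\h x^\ast)$, I would recover exactly the KKT system \eqref{equ:optimality}. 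By the discussion around \eqref{equ:reformulation}, this system characterizes critical points of \eqref{equ:ratio}, proving the equivalence.

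For part (b), I would set $P(\h x):=\mathrm{prox}_{\frac{1}{\beta}g}\bigl(\h x-\tfrac{1}{\beta}\nabla w(\h x)\bigr)$, so $\Phi(\h x)=\beta(\h x-P(\h x))$. The triangle inequality gives
\[
\|\Phi(\h x)-\Phi(\h y)\|_2 \;\le\; \beta\|\h x-\h y\|_2+\beta\|P(\h x)-P(\h y)\|_2 .
\]
Since $g$ is proper, convex, and lower semicontinuous, the proximal map $\mathrm{prox}_{\frac{1}{\beta}g}$ is firmly non-expansive, hence $1$-Lipschitz, so
\[
\|P(\h x)-P(\h y)\|_2 \;\le\; \|\h x-\h y\|_2+\tfrac{1}{\beta}\|\nabla w(\h x)-\nabla w(\h y)\|_2 \;\le\; \Bigl(1+\tfrac{L_w}{\beta}\Bigr)\|\h x-\h y\|_2,
\]
where the last step invokes \Cref{F_lipschitz}, which is applicable because both $\h x$ and $\h y$ lie in the feasible set $\{A\h x=\h b\}$. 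Substituting back yields the desired bound with $L_\Phi=L_w+2\beta$.

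The main obstacle I anticipate lies in part (a): I need to verify that $\nabla w(\h x^\ast)$ is well-defined (which requires $\h x^\ast\neq\h 0$, automatic since the ratio objective in \eqref{equ:ratio} is undefined at zero) and that the subdifferential sum rule may be freely applied to $g=\|\cdot\|_1+I(A\cdot-\h b)$ at a feasible point. Both reduce to bookkeeping under the standing full-row-rank hypothesis on $A$. Part (b) is then essentially mechanical once non-expansiveness of the proximal map and \Cref{F_lipschitz} are in place.
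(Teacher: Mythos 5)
Your proposal is correct and follows essentially the same route as the paper: part (a) via the prox fixed-point/optimality characterization reducing to the KKT system \eqref{equ:optimality}, and part (b) via the triangle inequality, nonexpansiveness of $\mathrm{prox}_{\frac{1}{\beta}g}$, and \Cref{F_lipschitz}, yielding $L_\Phi = L_w + 2\beta$. The extra bookkeeping you flag (feasibility of $\h x^\ast$, $\h x^\ast \neq \h 0$, and the polyhedral sum rule for $\partial g$) is harmless and only makes explicit what the paper leaves implicit.
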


It is stated in \eqref{eq:G} that $L_1/L_2$-A2 can be expressed as	 $\h x^{(k+1)} = \mathrm{prox}_{\frac{1}{\beta}g}\left(\h x^{(k)} -\frac 1 {\beta} \nabla w(\h x^{(k)}) \right)$. By the definition of $\Phi(\cdot)$ in \eqref{equ:G} and the decreasing property of $\|\h x\|_1/\|\h x\|_2$ in \Cref{lema:decreasing}, we can interpret A2 as a gradient descent method 
	$$\h x^{(k+1)}=\h x^{(k)}-\frac{1}{\beta}\Phi(\h x^{(k)}). $$
In the following theorem, we rely on \Cref{lemma:G} to show that the descent direction along  $\Phi(\cdot)$ leads to convergence.   

	\begin{theorem}\label{thm:convergence}
		Given a  sequence  $\{\h x^{(k)}, \alpha^{(k)}\}$ generated by $L_1/L_2$-A2. If $\{\h x^{(k)}\}$ is bounded, there exists a  subsequence that converges to a critical point of the ratio model \eqref{equ:ratio}. 
	\end{theorem}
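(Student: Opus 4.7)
The plan is to combine the sufficient-decrease estimate in \Cref{lema:decreasing} with the fixed-point characterization in \Cref{lemma:G} to show that any accumulation point of the bounded iterates is a zero of the map $\Phi$, and hence a critical point of \eqref{equ:ratio}.

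First, I would observe that every iterate satisfies $A\h x^{(k)}=\h b$ (enforced by the indicator inside $g$) and, since $\h b\neq\h 0$, the standard feasibility bound gives $\|\h x^{(k)}\|_2\geq\|A^T(AA^T)^{-1}\h b\|_2 = 1/L>0$. Combined with the standing boundedness hypothesis, this yields uniform two-sided bounds $0<c_1\leq\|\h x^{(k)}\|_2\leq c_2$ for all $k$, so $\alpha^{(k)}\in[1,\sqrt n]$ and $\Phi$ is well defined along the sequence.

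Next I would use the monotonicity from \Cref{lema:decreasing}: $\{\alpha^{(k)}\}$ is non-increasing and bounded below by $1$, hence convergent. Telescoping the decrease inequality and using $\|\h x^{(k+1)}\|_2\leq c_2$ yields
\[
\sum_{k=0}^{\infty}\|\h x^{(k+1)}-\h x^{(k)}\|_2^{2}\;\leq\;\frac{2c_2}{\beta}\Bigl(\alpha^{(0)}-\lim_{k\to\infty}\alpha^{(k)}\Bigr)\;<\;\infty,
\]
so in particular $\|\h x^{(k+1)}-\h x^{(k)}\|_2\to 0$. Bolzano--Weierstrass then extracts a subsequence $\h x^{(k_j)}\to\h x^{*}$, and closedness of the affine constraint together with the lower bound $c_1$ guarantees $A\h x^{*}=\h b$ and $\|\h x^{*}\|_2\geq c_1>0$.

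Finally, I would exploit the identification \eqref{eq:G}, which rewrites the A2 update as $\h x^{(k+1)}=\h x^{(k)}-\frac{1}{\beta}\Phi(\h x^{(k)})$. Hence $\|\Phi(\h x^{(k)})\|_2=\beta\|\h x^{(k+1)}-\h x^{(k)}\|_2\to 0$ along the full sequence, and the triangle inequality combined with the Lipschitz bound from \Cref{lemma:G}(b) gives
\[
\|\Phi(\h x^{*})\|_2\;\leq\;L_\Phi\,\|\h x^{*}-\h x^{(k_j)}\|_2+\|\Phi(\h x^{(k_j)})\|_2\;\longrightarrow\;0,
\]
so $\Phi(\h x^{*})=\h 0$ and \Cref{lemma:G}(a) identifies $\h x^{*}$ as a critical point of \eqref{equ:ratio}.

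The main obstacle I would guard against is degeneracy at the origin: the Lipschitz estimates in \Cref{lema_lipschitz} and \Cref{F_lipschitz} are only valid on $\{\h x:A\h x=\h b\}$, and $\nabla w$ together with $\Phi$ blow up near $\h 0$. Without a quantitative lower bound on $\|\h x^{(k)}\|_2$ one cannot pass Lipschitz continuity of $\Phi$ to the limit. Invoking the explicit bound $1/L=\|A^T(AA^T)^{-1}\h b\|_2$ that is already baked into the constants of \Cref{F_lipschitz} is precisely what removes this difficulty, and is the only place where the assumption $\h b\neq\h 0$ is used in an essential way.
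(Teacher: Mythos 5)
Your proposal is correct and follows essentially the same route as the paper's proof: \Cref{lema:decreasing} gives convergence of $\alpha^{(k)}$ and $\|\h x^{(k+1)}-\h x^{(k)}\|_2\to 0$, the identification $\h x^{(k+1)}=\h x^{(k)}-\frac{1}{\beta}\Phi(\h x^{(k)})$ gives $\|\Phi(\h x^{(k)})\|_2\to 0$, and the Lipschitz bound plus the zero characterization in \Cref{lemma:G} identify the subsequential limit as a critical point. Your added details (the telescoping summability, the lower bound $\|\h x^{(k)}\|_2\geq 1/L$ ensuring $\Phi$ and its Lipschitz estimate apply on the feasible set, and feasibility of the limit) are sound refinements of the same argument rather than a different approach.
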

	\begin{proof}
		According to \Cref{lema:decreasing},
		we know that $\alpha^{(k)}$ is decreasing and bounded from below, so there exists a scalar $\alpha^\ast$ such that $\alpha^{(k)} \rightarrow \alpha^{\ast}$. With the boundedness  assumption of $\h x$, we  get $\| \h x^{(k+1)}-\h x^{(k)} \|_2 \rightarrow 0$ from \Cref{lema:decreasing}, which implies that $\|\Phi(\h x^{(k)})\|_2 \rightarrow 0$. 
	The boundedness of $\h x^{(k)}$ also leads to   
		a convergent subsequence, i.e., $\h x^{(k_i)} \rightarrow \h x^{\ast}. $ Therefore, we have
		\begin{equation*}
		\begin{split}
			\|\Phi(\h x^\ast) \|_2  = & \left\|\Phi(\h x^\ast) - \Phi\left(\h x^{(k_i)}\right) + \Phi\left(\h x^{(k_i)}\right)\right\|_2 \\
		\leq & \left\|\Phi(\h x^\ast) - \Phi\left(\h x^{(k_i)}\right)\right\|_2 + \left\|\Phi\left(\h x^{(k_i)}\right)\right\|_2\\
		\leq& L_\Phi \| \h x^{(k_i)} - \h x^\ast\|_2 + \left\|\Phi\left(\h x^{(k_i)}\right)\right\|_2.
		\end{split}
		\end{equation*}
		As $k_i \rightarrow \infty$, we get $\|\Phi(\h x^\ast) \|_2 = 0$ and hence $\Phi(\h x^\ast) = \h 0. $ By \Cref{lemma:G},  $\{\h x^{(k_i)}\}$ converges to a critical point. 
		%  Then we need to analyze $\left\|\h r^{\ast} - \alpha^{\ast}\nabla \| \h x^{\ast} \|_2 + A^T \h s \right\|_2=0$. 
		%\begin{equation*}
		%	\begin{split}
		%		\left\|\h r^{\ast} - \alpha^{\ast}\nabla \| \h x^{\ast} \|_2 + A^T \h s \right\|_2& \leq \left\|\h r^{\ast} - \alpha^{\ast}\nabla \| \h x^{\ast} \|_2 + A^T \h s - \left(\h r^{k} - \alpha^{k}\nabla \| \h x^{k} \|_2+ A^T \h s\right)\right\|_2 \\
		%		& +\left\| \h r^{k} - \alpha^{k}\nabla \| \h x^{k} \|_2 + A^T \h s \right\|_2
		%	\end{split}
		%\end{equation*}
	\end{proof}

	\begin{remark}
		 \Cref{thm:convergence} does not require that the step-size $\frac{1}{\beta}$ is small, which is typically for gradient-based methods. In our numerical tests, 
		   we can choose small $\beta$  and get good results.
%		    It can explained by that $\h x$ has a a very good approximate of the direction to $\h x^\ast$, so the restricted Lipschitz constant is very small. We can choose large step-size. {\color{red} [discuss with Yan, probably move to numerical experiment.]}
		%  When $\beta = 0$, the $L_1/L_2$-A2 algorithm equals to $L_1/L_2$-A1. 
	\end{remark}

	\begin{theorem}\label{thm:convergence_A1}
		Given a  sequence  $\{\h x^{(k)}, \alpha^{(k)}\}$ generated by $L_1/L_2$-A1. If $\{\h x^{(k)}\}$ is bounded, it has a convergent subsequence.
		%, denoted by $\{\h x^{(k_i)}\}$, that converges to a critical point of the ratio model \eqref{equ:ratio}. 
	\end{theorem}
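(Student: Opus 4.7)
The plan is to establish two facts: first, that the scalar sequence $\{\alpha^{(k)}\}$ is monotonically non-increasing and bounded below, hence convergent; second, that the bounded sequence $\{\h x^{(k)}\}\subset\mathds{R}^n$ admits a convergent subsequence by Bolzano--Weierstrass. Together these give what the theorem asserts.

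For the monotonicity, I would exploit the optimality of $\h x^{(k+1)}$ for the linear programming subproblem in \eqref{equ:a1}. Because $\alpha^{(k)}=\|\h x^{(k)}\|_1/\|\h x^{(k)}\|_2$ by construction, the feasible point $\h x^{(k)}$ itself attains objective value
\[
\|\h x^{(k)}\|_1 - \left\langle \h x^{(k)},\frac{\alpha^{(k)}\h x^{(k)}}{\|\h x^{(k)}\|_2}\right\rangle \;=\; \|\h x^{(k)}\|_1-\alpha^{(k)}\|\h x^{(k)}\|_2 \;=\; 0.
\]
Since $\h x^{(k+1)}$ is a minimizer, its value is no larger, so $\|\h x^{(k+1)}\|_1 \leq \langle \h x^{(k+1)},\alpha^{(k)}\h x^{(k)}/\|\h x^{(k)}\|_2\rangle$. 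Applying Cauchy--Schwarz to the inner product yields $\|\h x^{(k+1)}\|_1 \leq \alpha^{(k)}\|\h x^{(k+1)}\|_2$, which rearranges to $\alpha^{(k+1)}\leq\alpha^{(k)}$. Because $\|\h x\|_1\geq\|\h x\|_2$ on $\mathds{R}^n$, we have $\alpha^{(k)}\geq 1$ for every $k$, so the decreasing sequence is bounded below and convergent.

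With monotonicity in hand, the vector-sequence claim is immediate: the hypothesis that $\{\h x^{(k)}\}$ is bounded in $\mathds{R}^n$ together with the Bolzano--Weierstrass theorem produces a convergent subsequence $\h x^{(k_i)}\to\h x^\ast$. Note that $\h x^{(k)}$ is always nonzero because $\h b\neq\h 0$ forces any feasible point to be nonzero, so the normalization $\h x^{(k)}/\|\h x^{(k)}\|_2$ is well-defined throughout.

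The main conceptual obstacle, rather than a technical one, is that A1 lacks the proximal term $\frac{\beta}{2}\|\h x-\h x^{(k)}\|_2^2$ present in A2, so one cannot derive a sufficient-descent inequality analogous to \Cref{lema:decreasing} that controls $\|\h x^{(k+1)}-\h x^{(k)}\|_2$ by $\alpha^{(k)}-\alpha^{(k+1)}$. This is exactly why the conclusion here is intentionally weaker than \Cref{thm:convergence}: we cannot conclude that $\|\h x^{(k+1)}-\h x^{(k)}\|_2 \to 0$, and hence cannot promote the limit of the convergent subsequence to a critical point via a Lipschitz-type argument on $\Phi$. Everything beyond the Cauchy--Schwarz step is essentially bookkeeping, and no new machinery beyond the optimality condition of the LP subproblem is required.
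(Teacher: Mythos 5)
Your proof is correct and follows essentially the same route as the paper: observe that $\h x^{(k)}$ is feasible with subproblem objective value zero, use optimality of $\h x^{(k+1)}$ plus Cauchy--Schwarz to get $\alpha^{(k+1)}\leq\alpha^{(k)}$, and then invoke boundedness of $\{\alpha^{(k)}\}$ and Bolzano--Weierstrass on $\{\h x^{(k)}\}$. Your closing remark about why no critical-point conclusion is available mirrors the paper's own remark following the theorem.
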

	\begin{proof}
		Denote 
		$$z(\h x,\h x^{(k)}) := \|\h x\|_1 - \left\langle \h x,  \frac{\alpha^{(k)}\h x^{(k)}}{\|\h x^{(k)}\|_2} \right\rangle.$$ 
		Since $z(\h x^{(k)},\h x^{(k)}) = 0$ by the definition of $\alpha^{(k)}$, the minimal value of $z(\h x,\h x^{(k)})$ subject to  the constraint $\{ \h x \ | \ A\h x=\h b\} $ is less than or equal to zero.   Specifically, $z(\h x^{(k+1)}, \h x^{(k)})\leq 0.$
		As a result,  by Cauchy-Schwarz inequality,  we have
		\begin{equation}
		\label{equ:alpha_d}		
		\| \h x^{(k+1)}\|_1  \leq\left\langle \h x^{(k+1)},  \frac{\alpha^{(k)}\h x^{(k)}}{\|\h x^{(k)}\|_2} \right\rangle \leq \alpha^{(k)} \|\h x^{(k+1)}\|_2,
		\end{equation}
		%The last inequality is from the convexity of the $L_2$ norm. 
		which implies $\alpha^{(k+1)}\leq \alpha^{(k)}$. Since $\alpha^{(k)}\in [1, \sqrt n]$, the decreasing sequence of $\alpha^{(k)}$  converges, i.e., $\alpha^{(k)} \rightarrow \alpha^\ast$.  
		%{\color{red} we can say $\alpha^{k_i}\rightarrow \alpha^\ast$, but we can't say there exists $\h x^\ast$ such that $\alpha^\ast=\|\h x^\ast\|_1/\|\h x^\ast\|_2 $.}\tb{\emph{Thanks for pointing out. Yes, we can not directly show the relation between $\h x^\ast$ and $\alpha^\ast$. But we can prove it owing to Lipschitz continuity of ratio. Anyway, it will not be used in the following proof.} 		
		By the boundedness of $\h x^{(k)} $, it has a convergent subsequence, i.e, there exists a vector $\h x^\ast$ such that  $\h x^{(k_i)}\rightarrow\h x^{\ast}$. 
		%		Without loss of generality, we assume it is the same subsequence, i.e., $\h x^{(k_i)} \rightarrow \h x^{\ast}. $ 	
%		Now given $\{\h x^\ast, \alpha^\ast\}$, we denote  
%		\begin{equation}\label{equ:sub_f}
%		\h x^{\ast \ast} := \arg \min\limits_{\h x} \left\{\|\h x\|_1  - \left\langle \h x,  \frac{\alpha^{\ast}\h x^{\ast}}{\|\h x^{\ast}\|_2} \right\rangle  \ \mathrm{ s.t. } \  A \h x =\h  b \right\},
%		\end{equation}
%		If $\Psi(\h x^{\ast \ast}, \h x^\ast)<0,$  then we have
%		$$
%		\|\h x^{\ast\ast}\|_1    < \left\langle \h x^{\ast\ast},  \frac{\alpha^{\ast}\h x^{\ast}}{\|\h x^{\ast}\|_2} \right\rangle 
%		\leq \alpha^\ast \|\h x^{\ast\ast}\|_2,
%		$$
%		which implies that $\alpha^{\ast\ast}:= \|\h x^{\ast\ast}\|_1/\|\h x^{\ast\ast}\|_2 < \alpha^{\ast}$. This is a contradiction to the convergence of $\alpha^{(k_i)}\rightarrow \alpha^\ast$ {\color{red}[I don't see why it's a contradiction.]}. Therefore, we have $\Psi(\h x^{\ast \ast}, \h x^\ast)=0$, which means that $\h x^\ast$ is a fixed point of \eqref{equ:sub_f}. 
%		
%		
%		{\color{red} [without sufficient decreasing, I don't think we can prove to converge to critical point. Yan?]}
%		By the first order optimality in \eqref{equ:sub_f} {\color{red} [why it is first order optimality?]}, \tb{\emph{I tried to connect  the global minimizer with KKT condition \eqref{equ:optimality} but I am not sure whether is correct. That is what I asked Prof. Yan to check for the proof in A1.  }} we have $\h x^\ast $ is a critical point of \eqref{equ:ratio}.    
	\end{proof}		
	
	\begin{remark}
 The sufficient decrease property (\Cref{lema:decreasing}) does not hold for $\beta=0$ when	$L_1/L_2$-A2 reduces to A1. So, we cannot show that A1  converges to a critical point.	
\end{remark} 
 
 \begin{remark}
 According to \Cref{thm:convergence} and  \Cref{thm:convergence_A1},  we prove  that  either both algorithms diverge due to unboundedness or there exists a convergent subsequence. It is possible that the solution can be unbounded. For example, $A$ has a zero-column, then the corresponding entry can take $+\infty$ so that the ratio of $L_1$ and $L_2$ is minimized.
 In the numerical tests, we demonstrate  empirically that $\{\h x^{(k)} \}$ is always bounded and hence convergent for general (random) matrices $A$.
\end{remark}
	
	\section{Numerical experiments}\label{sect:experiments}
	
	In this section, we 
	compare the proposed algorithms with state-of-the-art methods in sparse recovery. 
	All the numerical experiments are conducted on a desktop with CPU (Intel i7-6700, 3.4GHz) and $\mathrm{MATLAB \ 9.2 \ (R2017a)}. $
	
	We focus on the sparse recovery problem with highly coherent matrices, on which standard $L_1$ models fail. Following the works of
	\cite{louYHX14,yinLHX14,DCT2012coherence}, we consider an oversampled discrete cosine transform (DCT), defined  as $A= [\h a_1, \h a_2, \cdots, \h a_n]\in \mathds{R}^{m\times n}$ with 
	\begin{equation}\label{eq:oversampledDCT}
	\textstyle \h a_j := \frac{1}{\sqrt{m}}\cos \left(\frac{2\pi \h w j}{F} \right), \quad j = 1, \cdots, n,
	\end{equation}
	where $\h w$ is a random vector that is uniformly distributed in $[0, 1]^m$ and $F\in \mathds{R}$ is a positive parameter to control the coherence in a way that a larger $F$ yields a more coherent matrix. Throughout the experiments, we consider  over-sampled DCT matrices of size $64\times 1024$. The ground truth $\h x\in \mathds{R}^{n}$ is simulated as an $s$-sparse signal, where  $s$ is the number of nonzero entries. As suggested in  \cite{DCT2012coherence}, we require a minimum separation at least $2F$ in the support of $\h x$. As for the values of non-zero elements, we follow the work of \cite{lorenz2011constructing} to consider sparse signals with a high dynamic range. Define the dynamic range of a signal $\h x$  as $
	\Theta(\h x) = \frac{\max\{|\h x_s|\}}{\min \{ |\h x_s|\}}, 
	$ which can be controlled by an exponential factor $D$. In particular, we simulate  $\h x_s$ by the following MATLAB command,
	$$
	\verb|xs = sign(randn(s,1)).*10.^(D*rand(s,1))|
	$$
	In the experiments,  we set  $D = 3$ and $5$, corresponding to $\Theta \approx 10^3$ and $10^5$, respectively.  Note that \verb|randn| and \verb|rand| are the $\mathrm{MATLAB}$ commands for the Gaussian distribution $\mathcal{N}(0,1)$ and the uniform distribution $\mathcal{U}(0, 1)$, respectively. To compare with our previous work \cite{l1dl2} of the   $L_1/L_2$ minimization, we also consider that the nonzero elements follow the Gaussian distribution, i.e., $ (\h x_s)_i \sim\mathcal{N}(0,1),  i = 1, 2, \cdots, s.$  
	%Note that the Gaussian distribution case, $\Theta(\h x)$ could be large or small. 
		%Due to the non-convex nature of the proposed $L_1/L_2$ model, the initial guess $\h x^{(0)}$ is very important and should be well-chosen. A typical choice is the $L_1$ solution, which is also used here. Although the $L_1$ minimization can be solved by the ADMM (as described in Section~\ref{sect:admm}), we adopt a commercial optimization software called  Gurobi \cite{optimization2014inc} to minimize the $L_1$ norm via linear programming for the sake of efficiency.  The stopping criterion is when the relative error of $\h x^{(k)}$ to $\h x^{(k-1)}$ is smaller than  $1e-8$ or iterative number exceeds $10n$.

	The fidelity of sparse signal recovery is assessed in terms of \textit{success rate}, defined as the number of successful trials over the total number of trials. When the relative error between the ground truth $\h x$ and the reconstructed solution $\h x^\ast$, i.e., $\frac{\|\h x^\ast-  \h x\|_2}{\|\h x\|_2},$ is less than $10^{-3}$, we declare it as a \textit{success}. Moreover, we categorize the failure of not recovering the ground-truth signal as \textit{model/algorithm failures} and by comparing  the objective function $f(\cdot)$ at the ground truth $\h{x}$ and at the restored solution $\h{x}^\ast$. If $f(\h{x})>f(\h{x}^\ast)$, then $\h{x}$ is not a global minimizer of the model, so we regard it as a \textit{model failure}. If $f(\h{x})<f(\h x^\ast)$, then the algorithm does not reach a global minimizer. It is referred to as an \textit{algorithm failure}. Similarly to success rates, we can define model-failure rates and algorithm-failure rates.  

	\subsection{Algorithmic Comparison}\label{sect:alg}

	We present various computational aspects of the proposed algorithms, i.e.,  BS, A1, and A2, together with comparison to our previous ADMM approach \cite{l1dl2}.
	First of all, we attempt to demonstrate the convergence of all proposed algorithms using an example of $s=15$, $F = 15$ (so the minimal separation is 30), and nonzero elements following Gaussian distribution.  Since the ratio model is solved via the $L_1$-$\alpha L_2$ model, we plot the values of $\|\h x^{(k)} \|_1 -\alpha^{(k-1)} \|\h x^{(k)}\|_2$ and $\alpha^{(k)}$  versus iteration counter $k$ in \Cref{fig:obj}. For $L_1/L_2$-BS,  we record the value at each outer iteration and the stopping conditions are either the maximum outer iteration reaches 10 or $|\alpha^{(k)}-\alpha^{(k-1)}|\leq 10^{-2}$. For each iteration of A1, A2, and the inner loop of BS, the stopping criterions are the relative error $\| \h x^{(k)}-\h x^{(k-1)}\|_2 / \|\h x^{(k)}\|_2 \leq 10^{-8}.$  The left plot in \Cref{fig:obj} illustrates the convergence of  the three algorithms in the sense that $\|\h x^{(k)} \|_1 -\alpha^{(k)} \|\h x^{(k)}\|_2$ goes down. Both A1 and A2 are faster than BS as BS starts with a larger range of $\alpha$ as $[1,\sqrt{n}]=[1,32]$, while A1 and A2 start with a good initial value of $\alpha^{(0)}=\frac{\|\h x^{(0)}\|_1}{\|\h x^{(0)}\|_2}$, which is very close to the final optimal value $\alpha^\ast$.
	%the $L_1/L_2$-A1 algorithm converges the fastest.
	%also stops earlier with a very small value of $\|\h x^{(k)} \|_1 -\alpha^{(k)} \|\h x^{(k)}\|_2$. For $L_1/L_2$-A2, the value decreases at the beginning and then be stable. 
	The right plot in \Cref{fig:obj} examines the evolution of $\alpha^{(k)}$, which gradually becomes stable and approaches  to a similar value around 3.06 for all three algorithms.
	\Cref{fig:obj} confirms the decrease property of $\alpha^{(k)}$ proved in \Cref{lema:decreasing}. 
	%Note that  $L_1/L_2$-A2 reduces to $L_1/L_2$-A1 when $\beta = 0$ {\color{red}[so what?]}

	\begin{figure}[t]
		\centering 
		\begin{tabular}{cc}
			\includegraphics[width=0.2\textwidth]{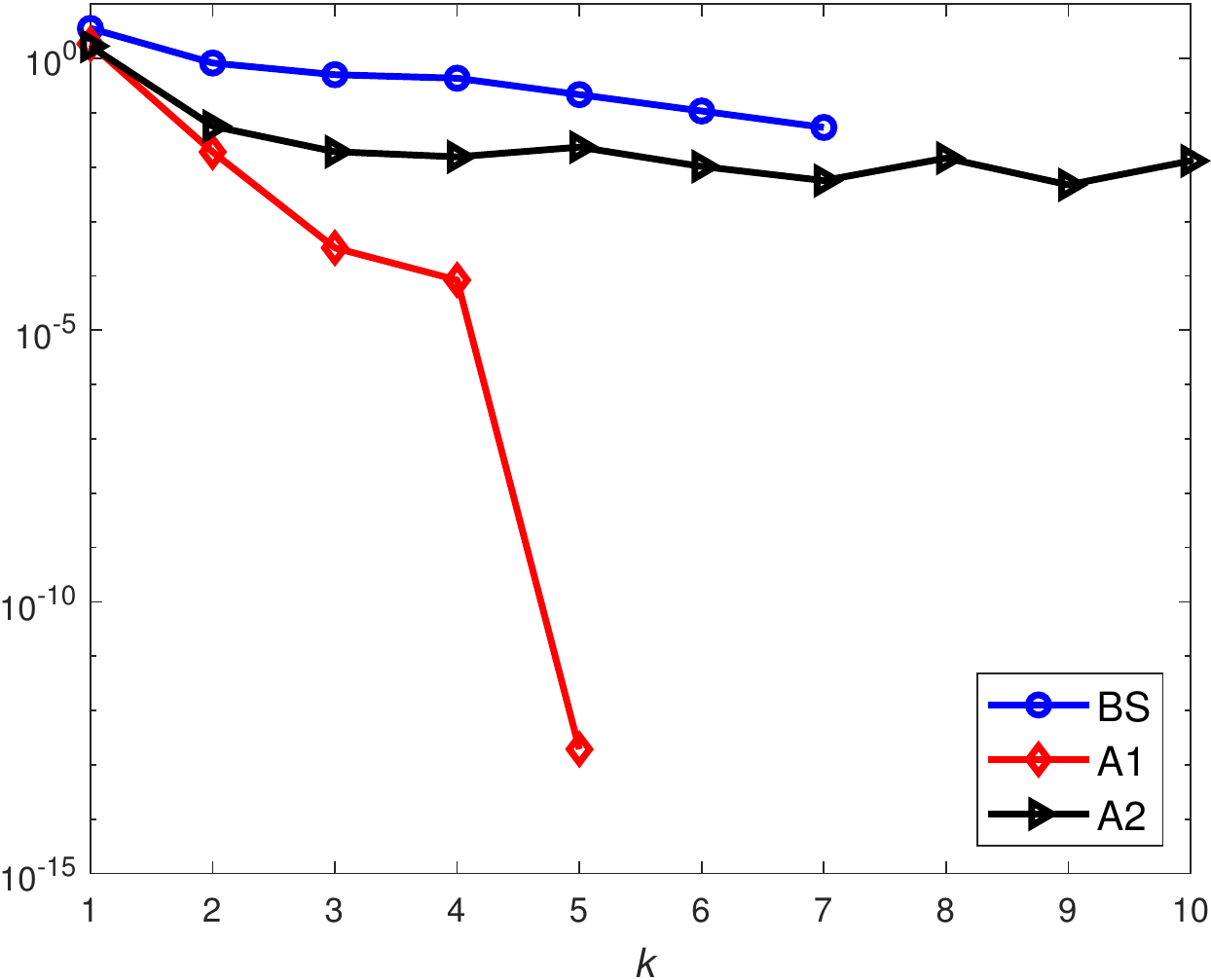} &
			\includegraphics[width=0.2\textwidth]{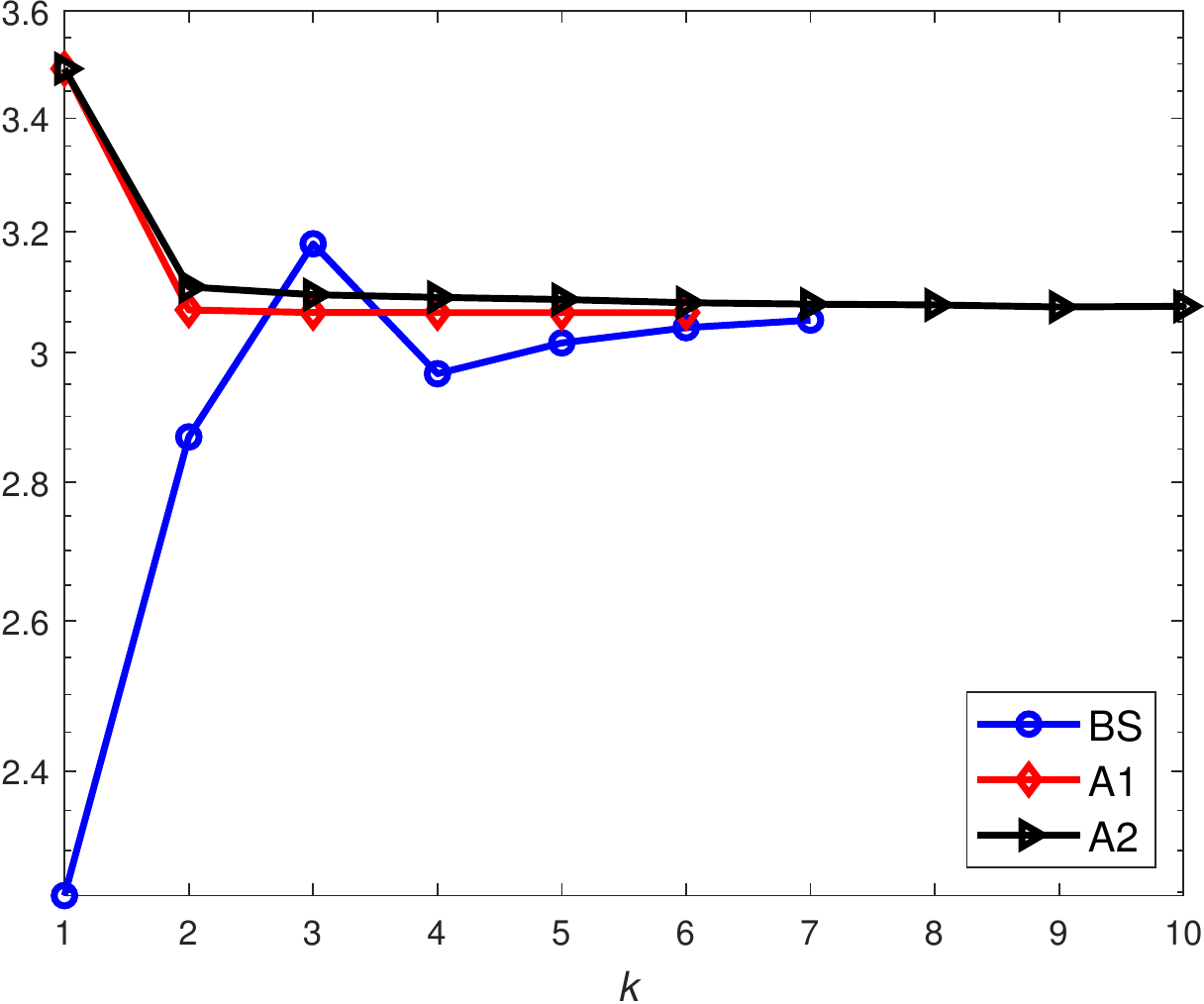}
		\end{tabular}
		\caption{Empirical analysis on convergence: $\|\h x^{(k)} \|_1 -\alpha^{(k-1)} \|\h x^{(k)}\|_2$ (left) and $\alpha^{(k)}$ (right) versus iteration counter $k$ for BS, A1, and A2. }
		\label{fig:obj}
	\end{figure}

	In \Cref{thm:convergence}, we require the sequence $\{\h x^{(k)}\}$ to be bounded for the convergence analysis. Here we aim at an empirical verification on the boundedness. In particular, 
	%Since $A \h x^{(k)}=\h b$, the boundedness depends on not only the algorithm but also the linear system $A\h x = \h b$. Although we can not give a general conclusion on the probability of unboundedness is zero, we shows it empirically. Here
	we test on various kinds of linear systems with $F \in\{1,20\}$ and sparsity ranging from 2 to 22. In each setting, we randomly generate 50 pairs of  ground-truth signals and linear systems to  compute  the  $L_2$ norm of solutions obtained by A1 and A2, along with the  $L_2$ norm of ground-truth signals. The mean values of these $L_2$ norms are plotted in \Cref{fig:l2norm}.   As the maximum values are finite numbers, it means that the reconstructed signal is always bounded. 
	\Cref{fig:l2norm} also shows that the $L_2$ norms of A1 and A2 align quite well with the ground truth when the sparsity is below 14, no matter the system is coherent or not. When the matrix is highly coherent with more nonzero elements, both A1 and A2 give much larger values of the $L_2$ norm compared to the ground truth. It is  because a larger $L_2$ norm gives rise to a smaller value in the ratio of $L_1/L_2$ that we try to minimize. In any cases, the solutions of both A1 and A2 are shown to be bounded.  
	%the difference of the $L_2$ norm between A1, A2 and ground truth is very small
	%when the sparsity is less than 14. . When matrix $A$ is high-coherence with less sparse ground truth, the solution via our algorithms have a much higher $L_2$ norm than the one in ground truth. This can be explained as our objective function is $L_1/L_2$ ratio and we aim on finding a smallest ratio value with possible large $L_2$ norm. But the solution is still bounded. 
	%Note that $L_1/L_2$-A1 is implemented by Gurobi in each iteration.
	% We also checked the result in each iteration for all the tests and there is not unbounded warning from the Gurobi optimizer.  
	%{\color{red} [not sure if we want to mention Gurobi]}

	\begin{figure}[t]
		\centering 
		\begin{tabular}{cc}
			$F=1$ & $F=20$  \\
			\includegraphics[width=0.2\textwidth]{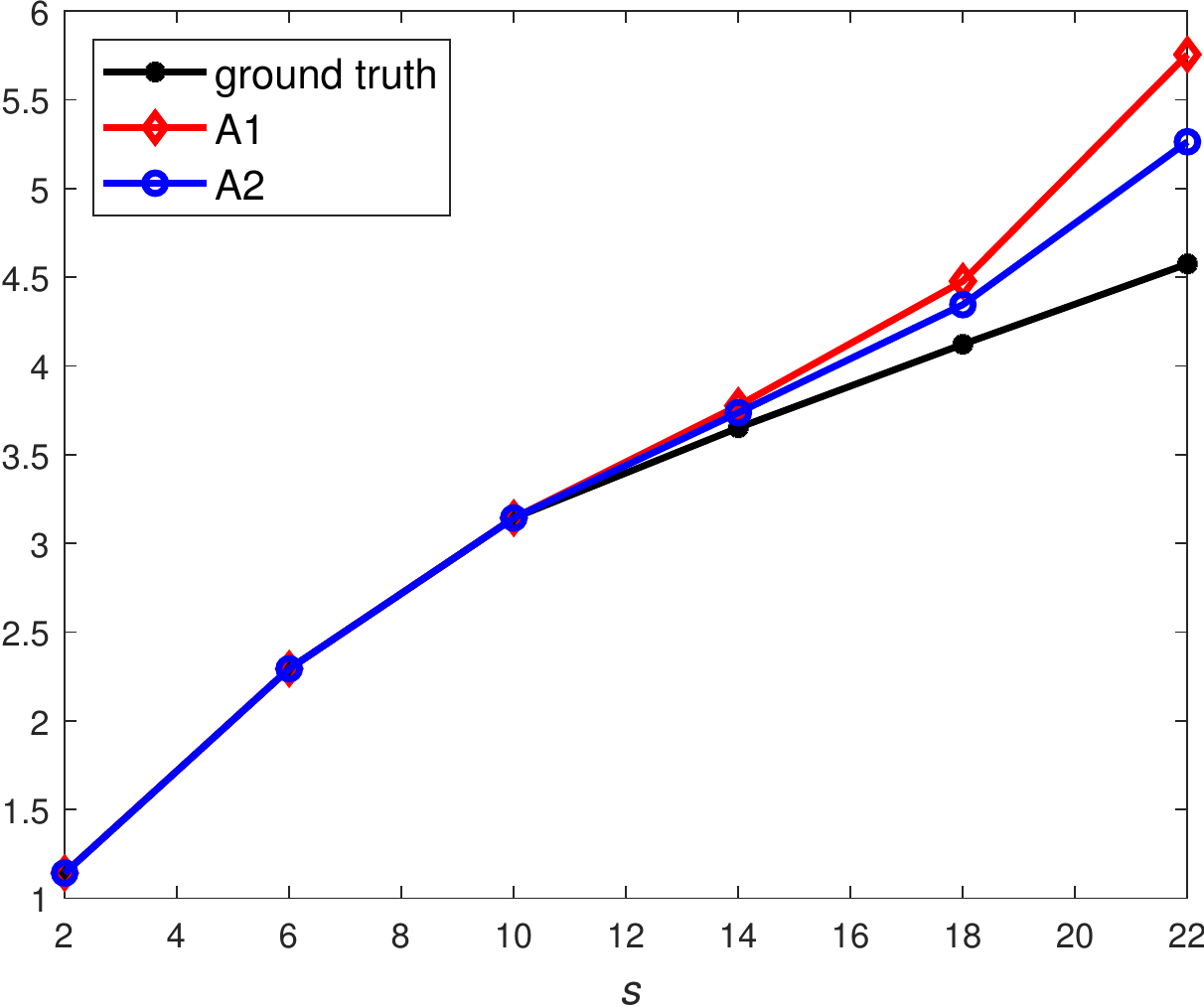}&
			\includegraphics[width=0.2\textwidth]{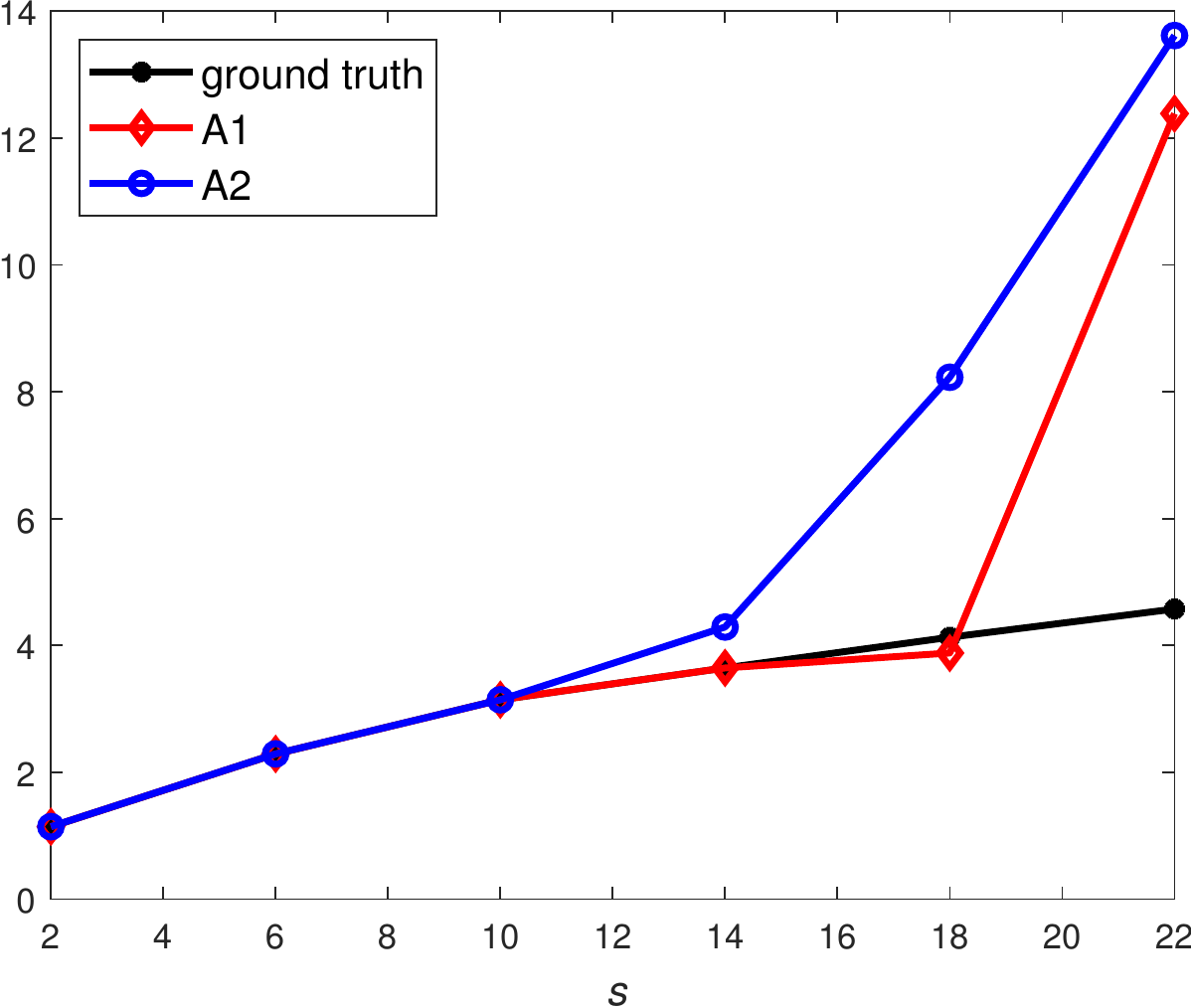}\\
		\end{tabular}
		\caption{The $L_2$ norm of the ground truth vectors as well as  the reconstructed solutions by A1 and A2.  }
		\label{fig:l2norm}
	\end{figure}

	%\subsection{Comparison on various algorithms}
	Next, we compare the three algorithms with our previous ADMM approach \cite{l1dl2}. We consider $F = 1$ and $20$ with nonzero elements following the Gaussian distribution or having  high dynamic ranges. 
	We randomly simulate 50 trials for each sparsity level and compute the average of success rates, algorithm-failure rates, and computation time. 
	The Gaussian case is illustrated in \Cref{fig:alg_SR_time_G},
	% we set $\rho_1 = \rho_2 = 2000$ for $L_1/L_2$-ADMM. As for $L_1/L_2$-A2, $\beta = 1$ and $\rho=20$. We observe 
	showing that ADMM is the worst in terms of success rates partly due to high algorithm failure rates. Here, $\rho_1 = \rho_2 = 2000$ for ADMM and $\beta = 1, \rho = 20$ for A2.  In addition, BS achieves the highest success rates but is the slowest. Both A1 and A2 have similar performance to  BS with much reduced computation time. 
	% The algorithm failure in ADMM is much higher than the one in other algorithms, which shows the efficiency in our proposed algorithms. 
	\Cref{fig:alg_SR_time_D35} examines the case of the dynamic range for the non-zero values in $\h x$ with $D=3$ and $5$. Here we set $\beta = 10^{-5}$ and $\rho = 0.3$ for A2,  while $\rho_1 = \rho_2 = 100$ for ADMM. 
	Similar performance is observed as the Gaussian case. 
	%Our three algorithms got the same success rate when $F=1, D=5$ and the similar performance in other tests. ADMM is still the worst. We also plot the algorithm failure for these algorithms. Here, $L_1/L_2$-BS and A1 are the lowest and ADMM got the highest rate. 
	In summary, we rate A1 as the most efficient algorithm for minimizing the ratio model with a balanced performance between accuracy and computational costs. We also observe that 
	all the algorithms tend to give better performance in terms of 
	success rates with
	higher dynamic ranges, which seems counter-intuitive. We will revisit this phenomenon in \Cref{sect:discussion}.
	
	\begin{figure*}[tbhp]
		\centering 
		\begin{tabular}{ccc}
			success rates & algorithm-failure rates & computation time  \\
			\includegraphics[width=0.27\textwidth]{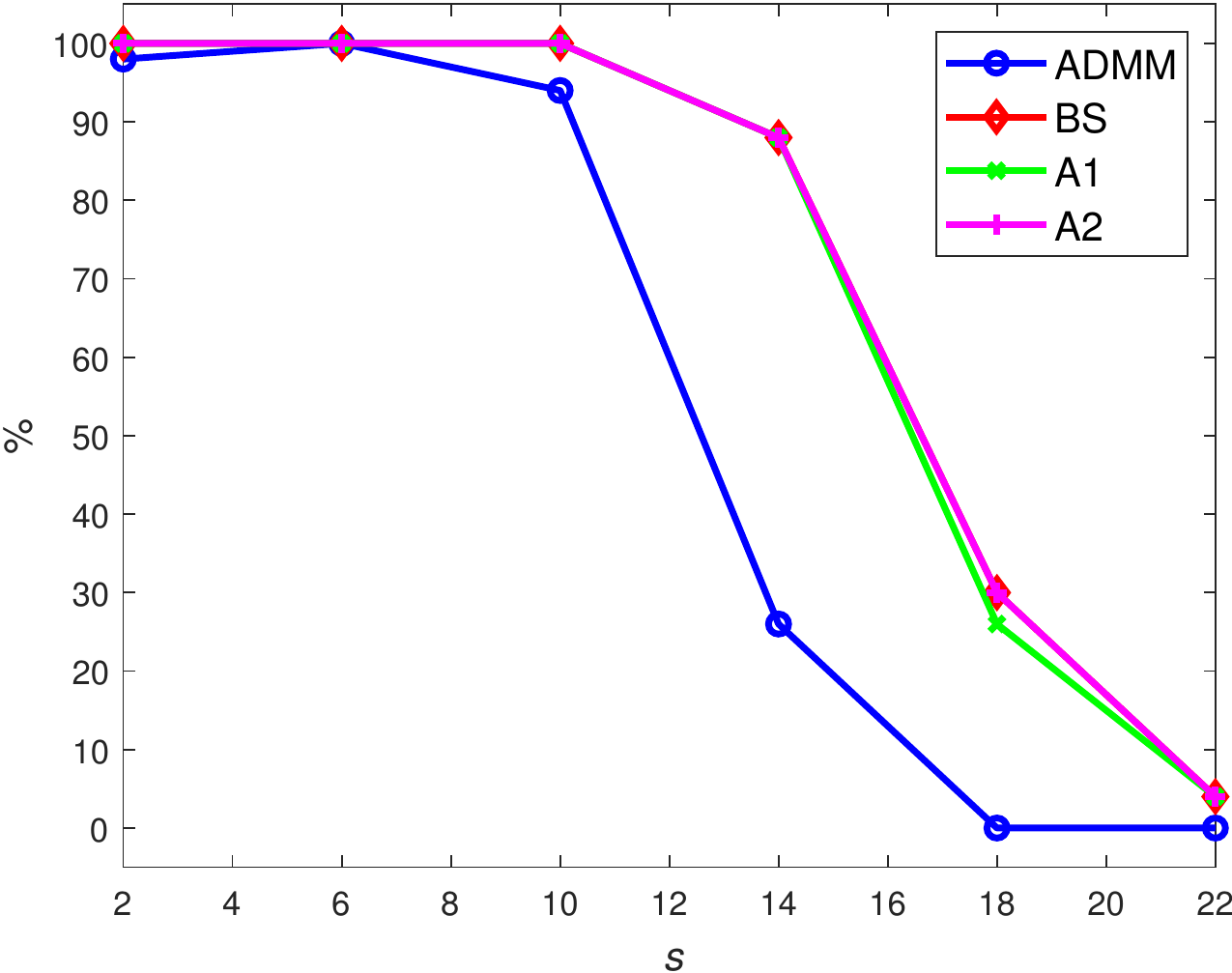}&
			\includegraphics[width=0.27\textwidth]{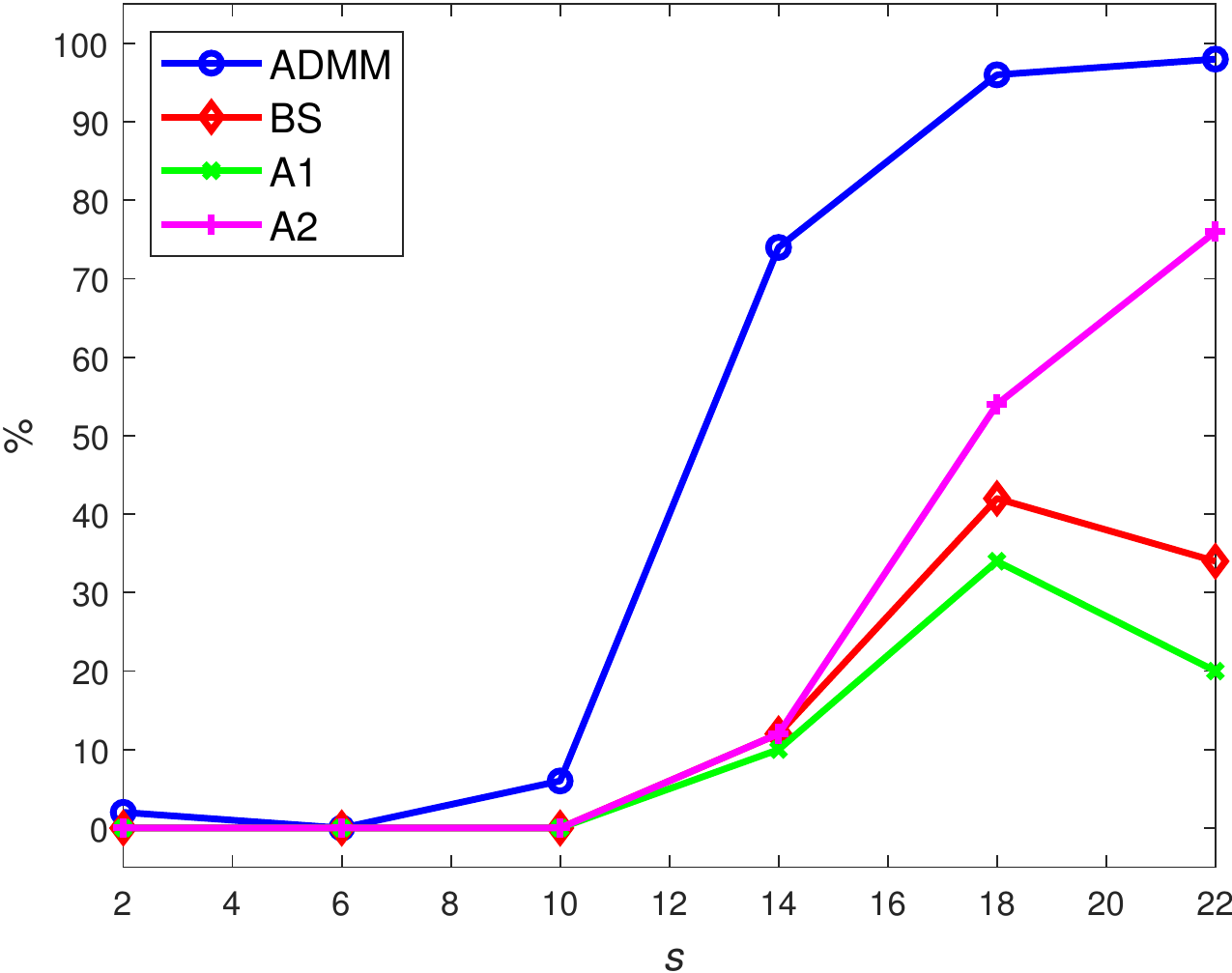}&
			\includegraphics[width=0.27\textwidth]{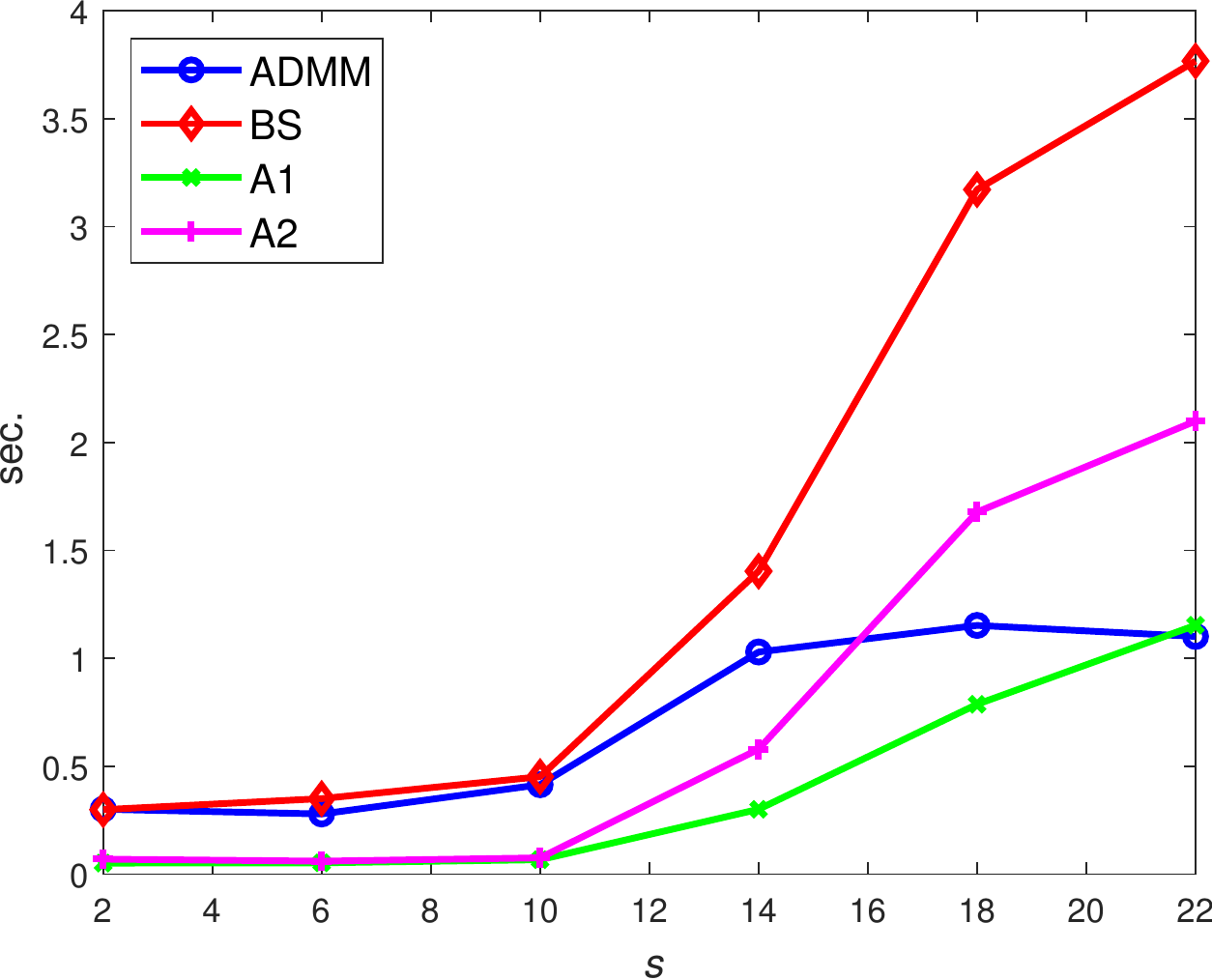} \\
			\includegraphics[width=0.27\textwidth]{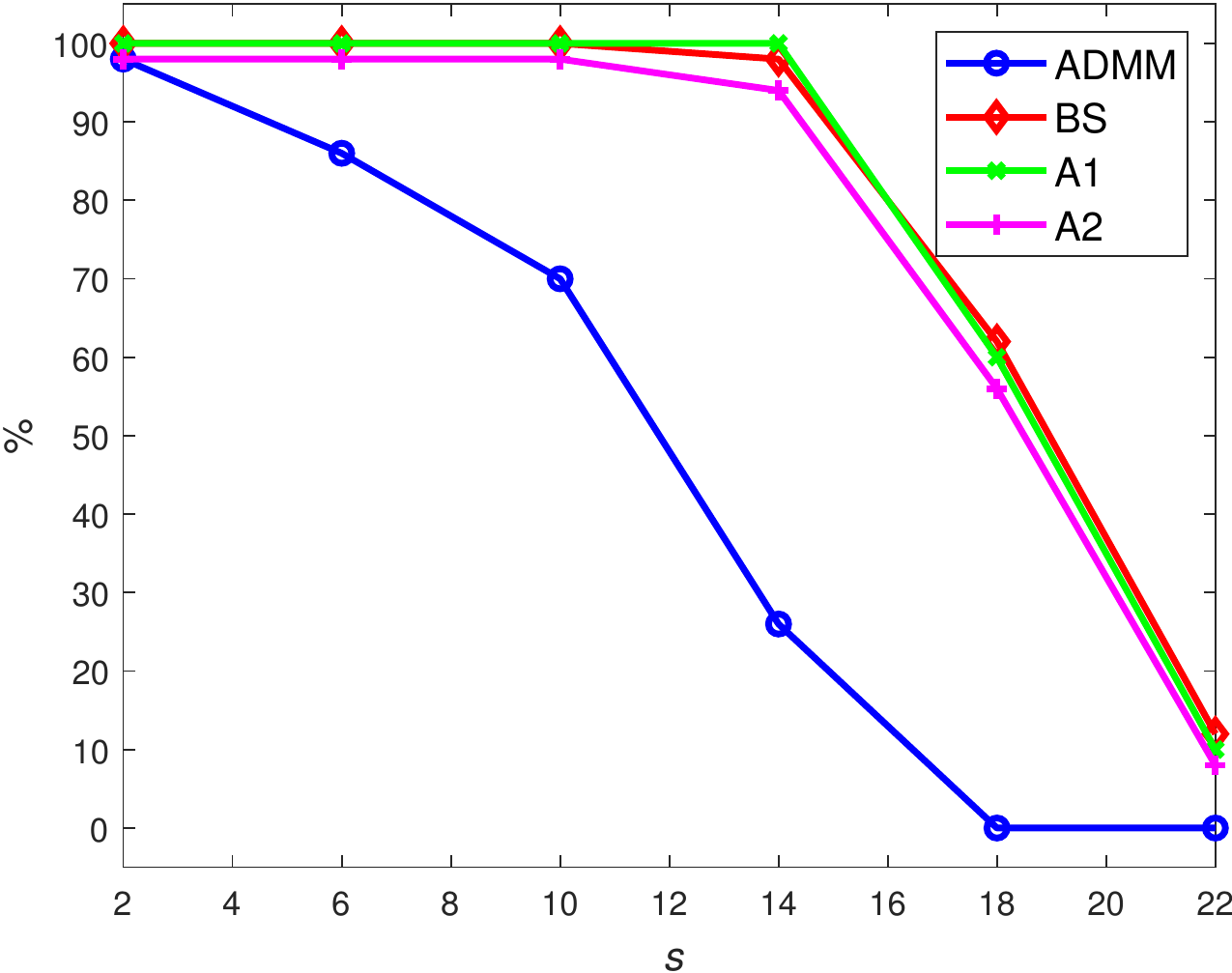}&
			\includegraphics[width=0.27\textwidth]{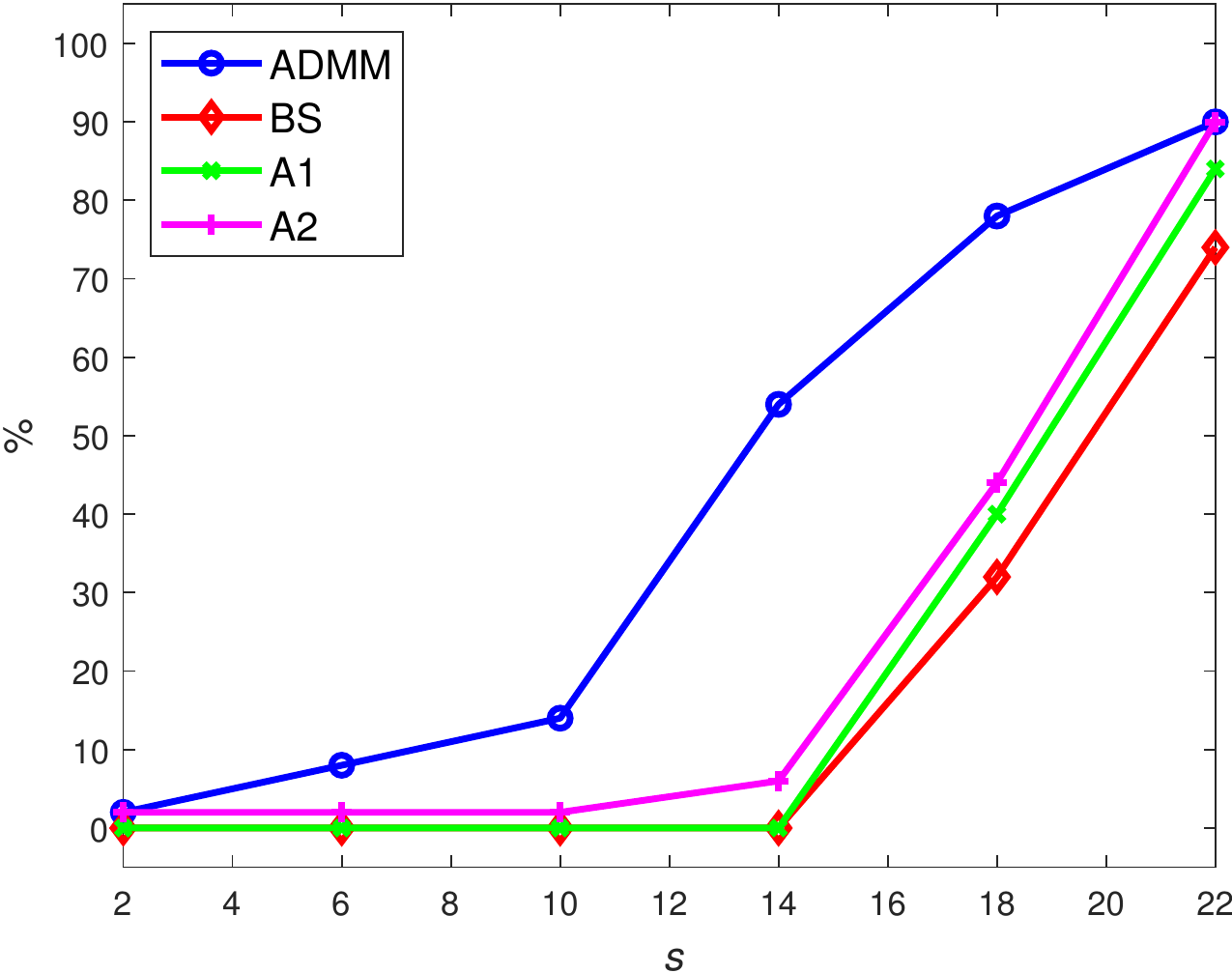}&
			\includegraphics[width=0.27\textwidth]{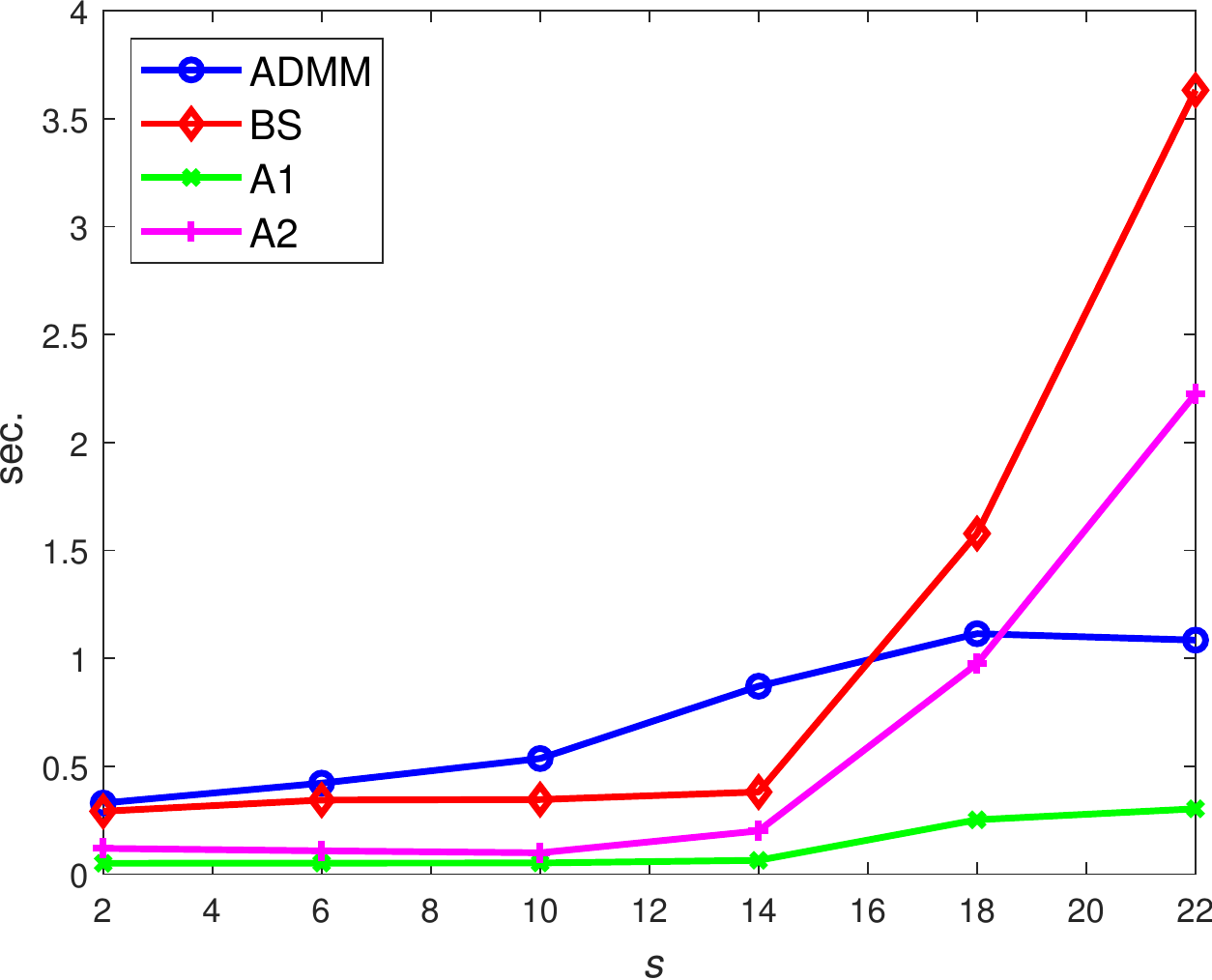}\\
		\end{tabular}
		\caption{Algorithmic comparison in the Gaussian distribution case with $F = 1$ (top) and $F = 20$ (bottom) in terms of success rates (left), algorithm-failure rates (middle), and computation time (right). }
		\label{fig:alg_SR_time_G}
	\end{figure*}

	%\begin{figure}[tpb]
	%	\centering 
	%	\begin{tabular}{cccc}
	%		$F=1, D = 3$ &$F=1, D = 5$ & $F=20, D = 3$ & $F=20, D = 5$ \\
	%		\includegraphics[width=0.25\textwidth]{fig/SR_F1_D3_l1dl2}&
	%		\includegraphics[width=0.25\textwidth]{fig/SR_F1_D5_l1dl2} &
	%		\includegraphics[width=0.25\textwidth]{fig/SR_F20_D3_l1dl2}&
	%		\includegraphics[width=0.25\textwidth]{fig/SR_F20_D5_l1dl2} \\
	%		\includegraphics[width=0.25\textwidth]{fig/AF_F1_D3_l1dl2}&
	%		\includegraphics[width=0.25\textwidth]{fig/AF_F1_D5_l1dl2}&
	%		\includegraphics[width=0.25\textwidth]{fig/AF_F20_D3_l1dl2}&
	%		\includegraphics[width=0.25\textwidth]{fig/AF_F20_D5_l1dl2}\\
	%	\end{tabular}
	%	\caption{Algorithmic comparison in terms of success rates (top) and algorithm failure (bottom) in the dynamic-range case. {\color{red} [maybe we only show success rates?]} }
	%	\label{fig:alg_SR_time_D35}
	%\end{figure}

	\begin{figure}[t]
		\centering 
		\begin{tabular}{cc}
			$F= 1, D = 3$ & $F = 20, D=3$ \\		\includegraphics[width=0.2\textwidth]{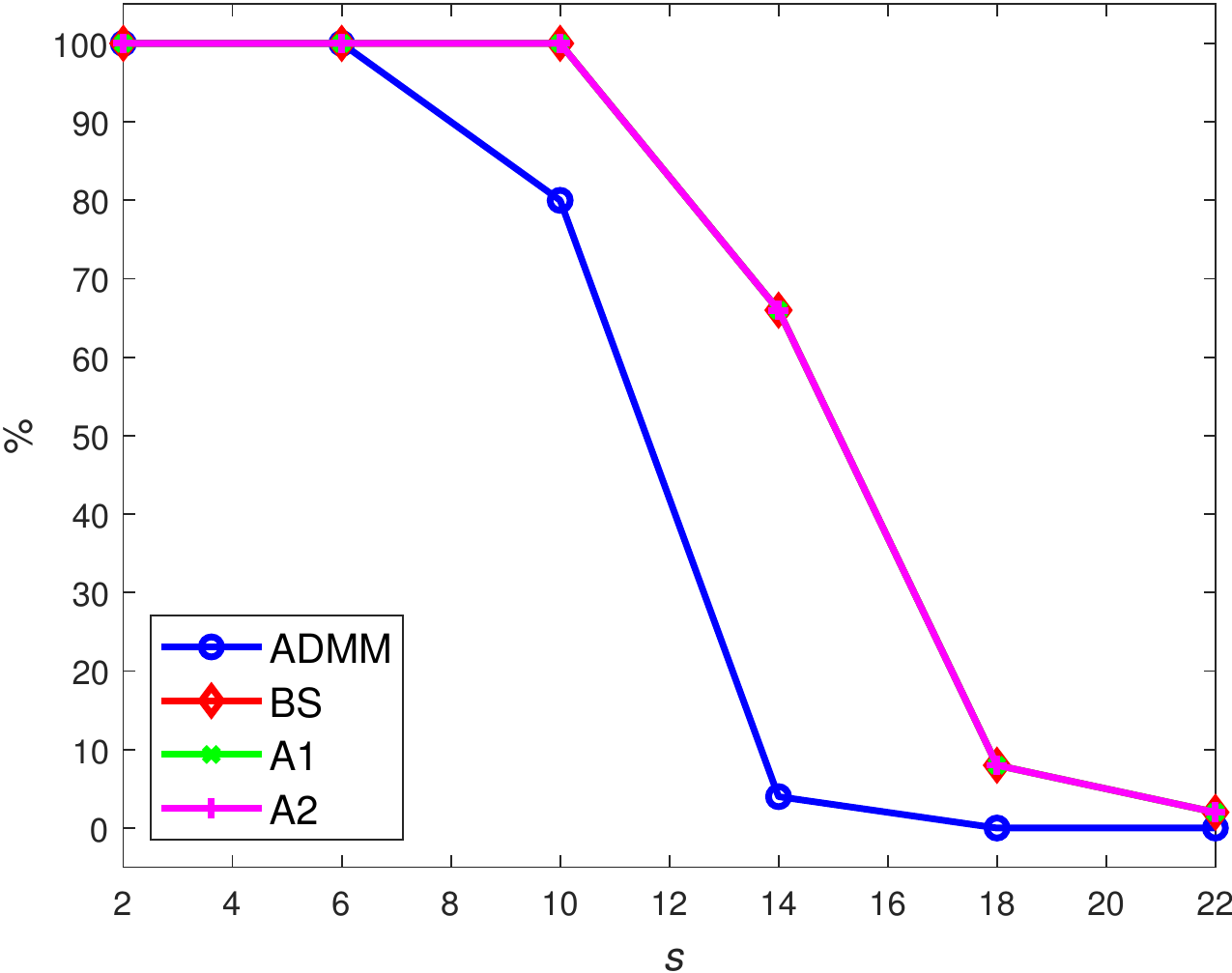} &
			\includegraphics[width=0.2\textwidth]{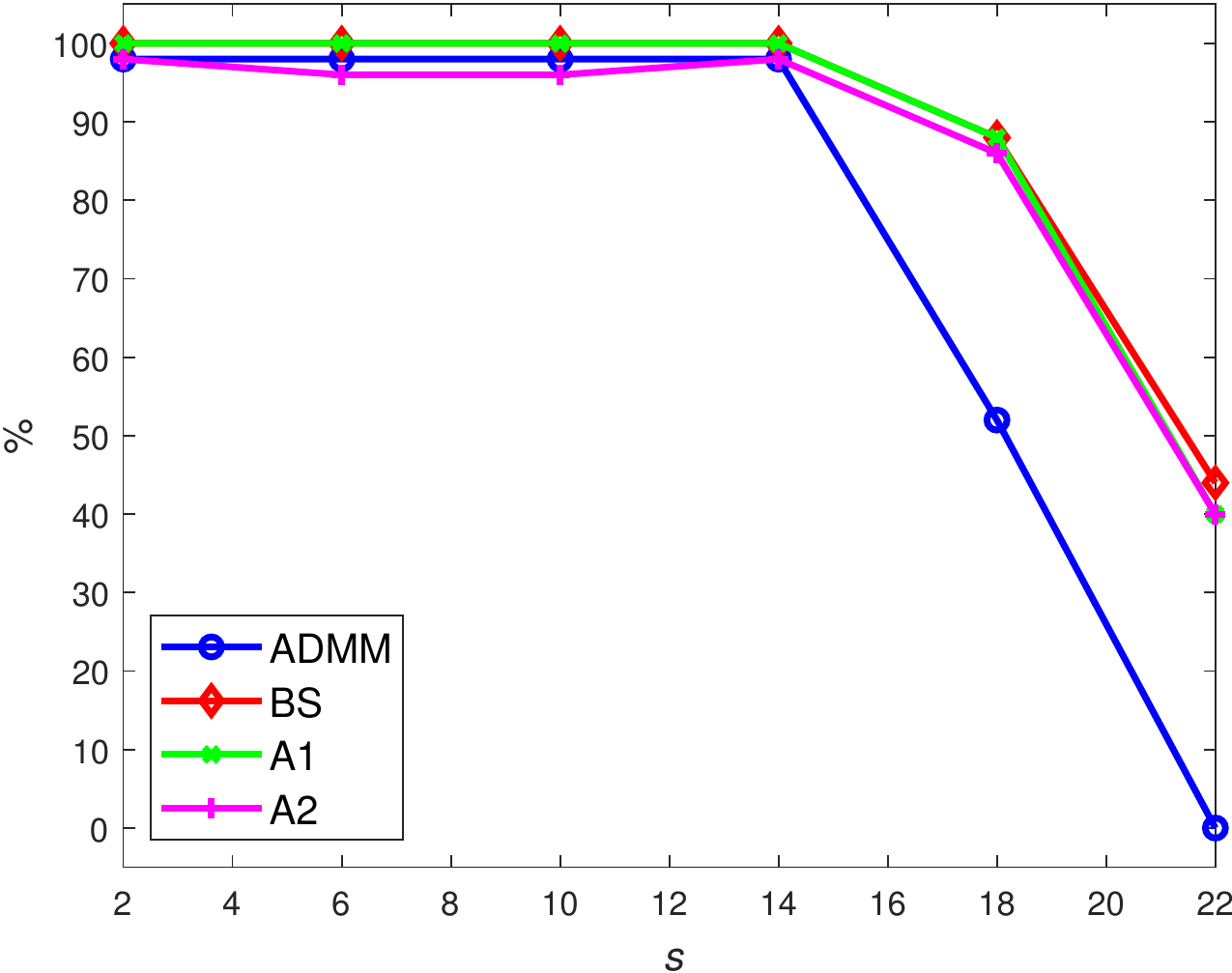}\\
			$F = 1, D=5$ & $F = 20, D=5$\\
			\includegraphics[width=0.2\textwidth]{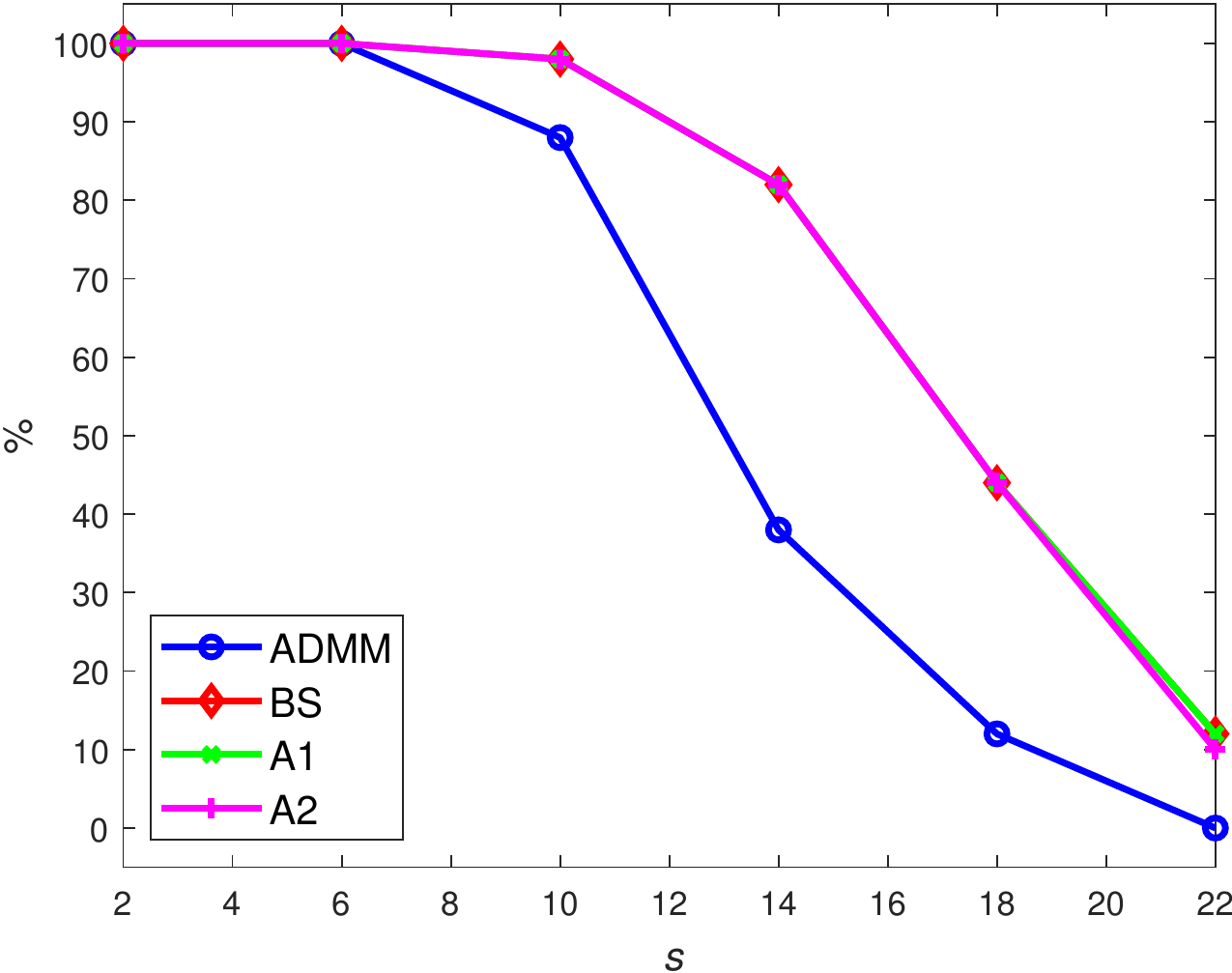}&
			\includegraphics[width=0.2\textwidth]{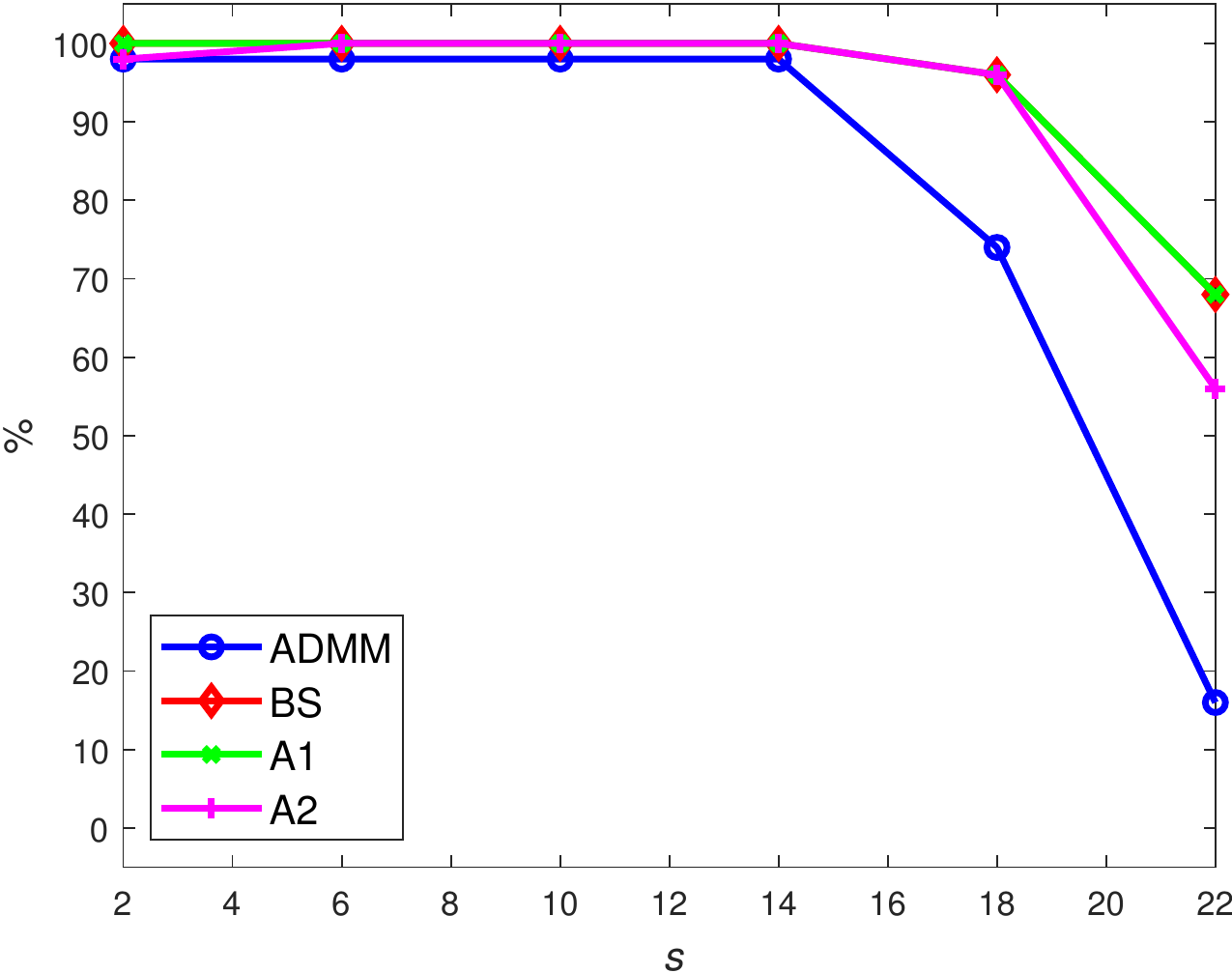}\\
		\end{tabular}
		\caption{Success rates of different $L_1/L_2$ minimizing  algorithms  versus sparsity at coherence levels $F=1$ (left) and $F=20$ (right) as well as  high dynamic ranges of $D=3$ (top) and $D=5$ (bottom). . }
		\label{fig:alg_SR_time_D35}
	\end{figure}

	\subsection{Model Comparison}
	
	We intend to compare  various sparse promoting models. Since the Gaussian case was conducted in our previous work \cite{l1dl2}, we focus on the dynamic range here. We  compare the proposed $L_1/L_2$ model with the following models:  $L_1$ \cite{chenDS98}, $L_p$ \cite{chartrand07},  $L_1$-$L_2$ \cite{yinLHX14,louYHX14}, and TL1 \cite{zhangX18}. We adopt 
	$L_1/L_2$-A1 to solve for the ratio model, as it is  the most efficient algorithm from the discussion in \Cref{sect:alg}.  The initial guess for all  non-convex models is the $L_1$ solution obtained by Gurobi. We choose  $p = 1/2$ for $L_p$ and $a = 10^{D-1}$ for TL1 when the range factor $D$ is known \textit{a priori}. 
	
	\Cref{fig:compare_all_F1_F20_min} plots the success rates of  $F = 1, 20$ and $D = 3,5$. We observe that TL1 is the best except for the low coherence and the low dynamic case, where $L_p$ is the best. But $L_p$ is the worst in the other cases. The $L_1/L_2$ model is always the second best. Note that the ratio model is parameter-free, while the performance of TL1 largely relies on the parameter $a$. \Cref{fig:compare_TL1_F1_F20_min} examines the success rate of TL1 with different values of $a$. We choose  $a = 10^{D-1}$ in the model comparison, which is almost the best among these testing values of $a$. If no such prior information of the dynamic range  were available to tune $a$, the performance of TL1 might be worse than $L_1/L_2$. 
	
	\begin{figure}[tbhp]
		\centering 
		\begin{tabular}{cc}
			$F = 1, D=3$ & $F = 20, D=3$ \\		\includegraphics[width=0.2\textwidth]{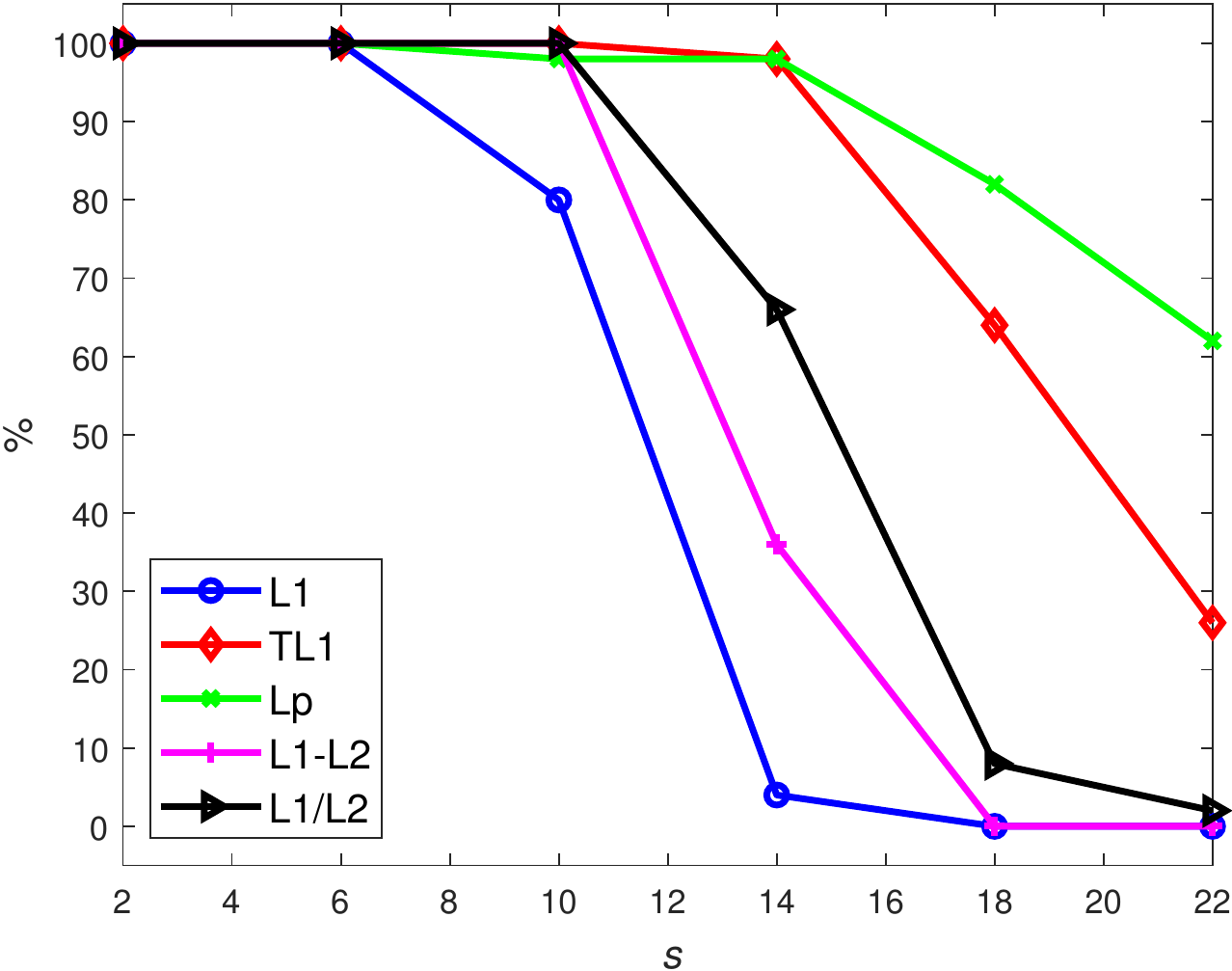} &
			\includegraphics[width=0.2\textwidth]{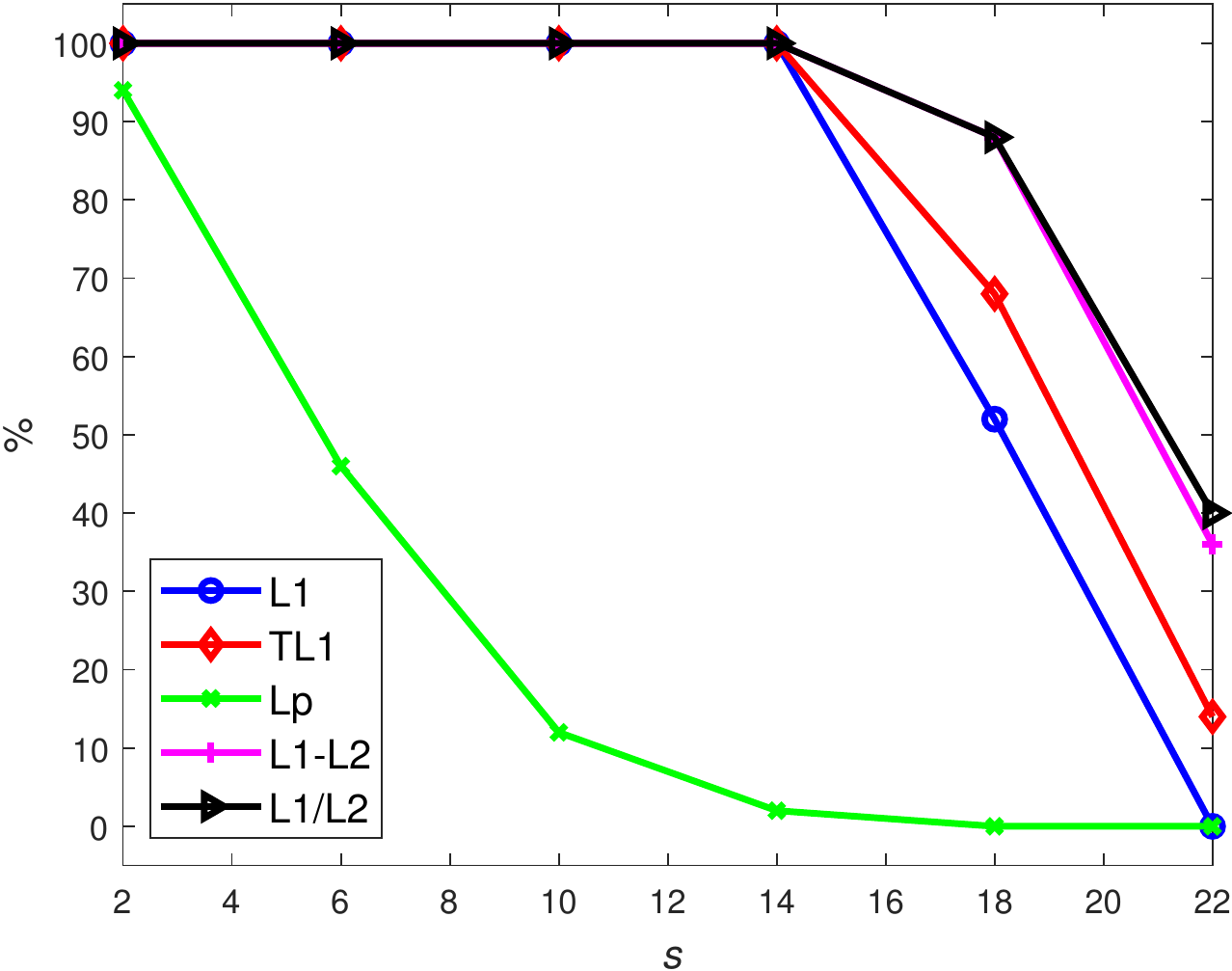}\\
			$F = 1, D=5$ & $F = 20, D=5$\\
			\includegraphics[width=0.2\textwidth]{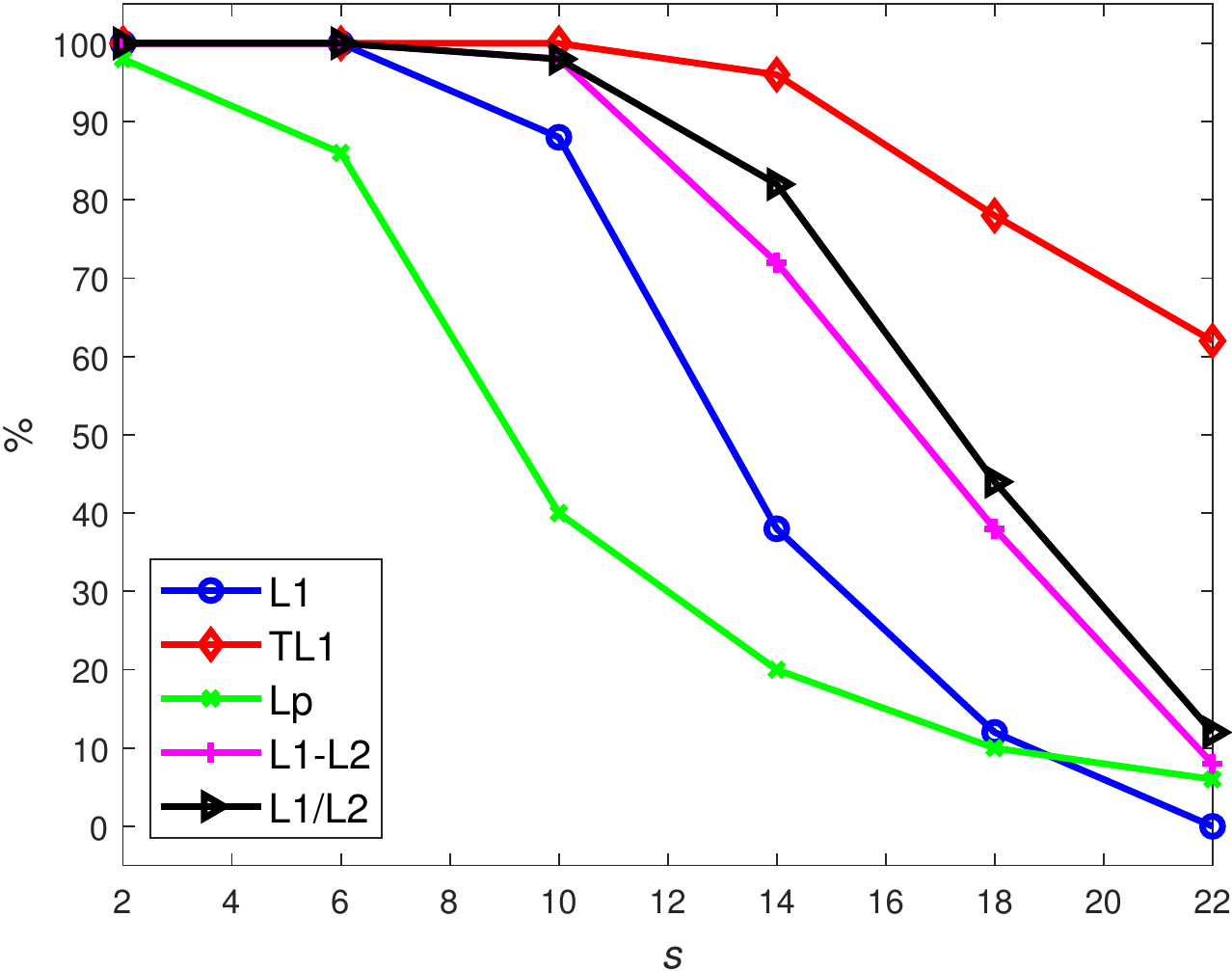}&
			\includegraphics[width=0.2\textwidth]{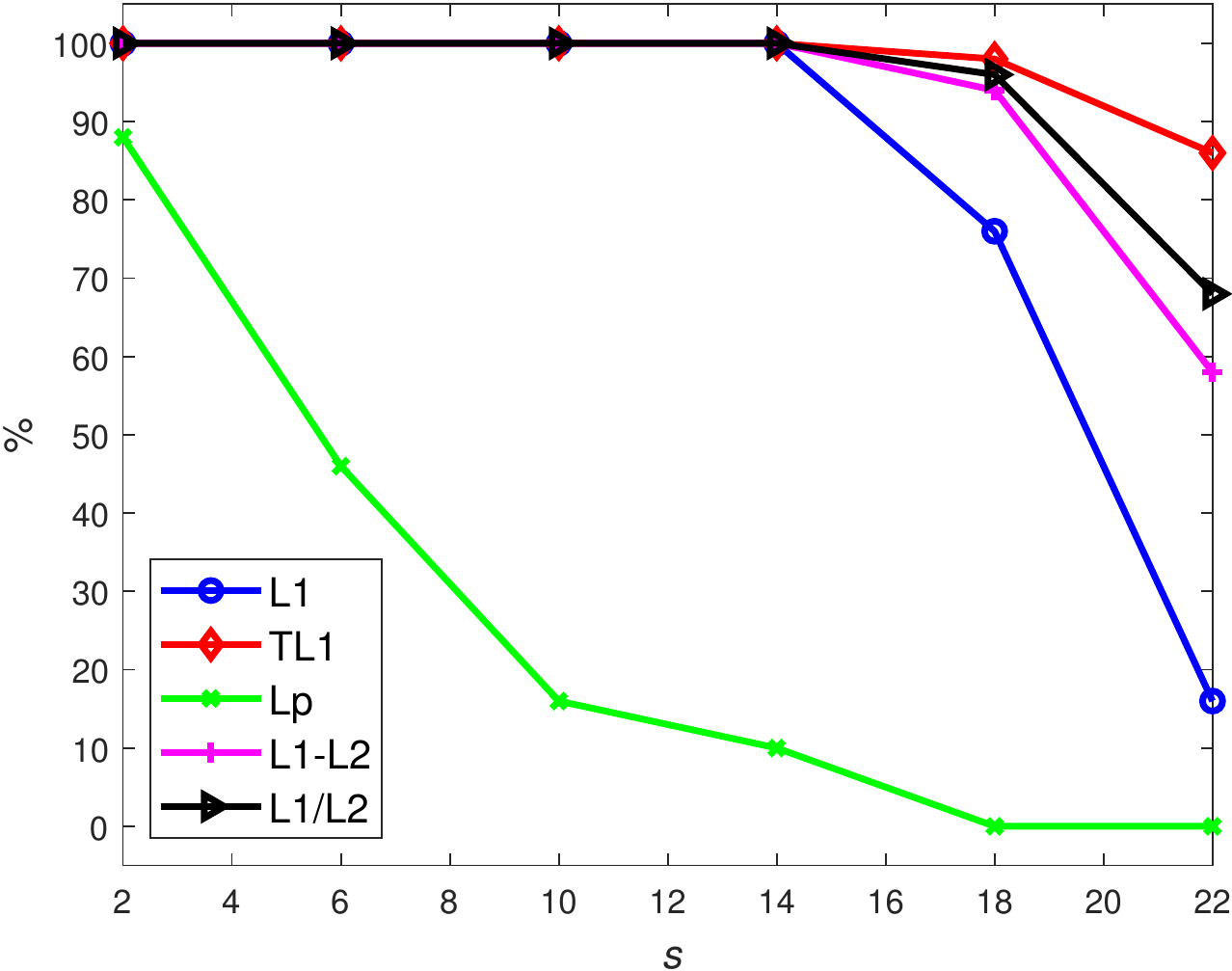}\\
		\end{tabular}
		\caption{Success rates of different models versus sparsity at coherence levels $F=1$ (left) and $F=20$ (right) as well as  high dynamic ranges of $D=3$ (top) and $D=5$ (bottom). }
		\label{fig:compare_all_F1_F20_min}
	\end{figure}
	
	\begin{figure}[tbhp]
		\centering 
		\begin{tabular}{cc}
			$F = 1, D=3$ & $F = 20, D=3$ \\
			\includegraphics[width=0.2\textwidth]{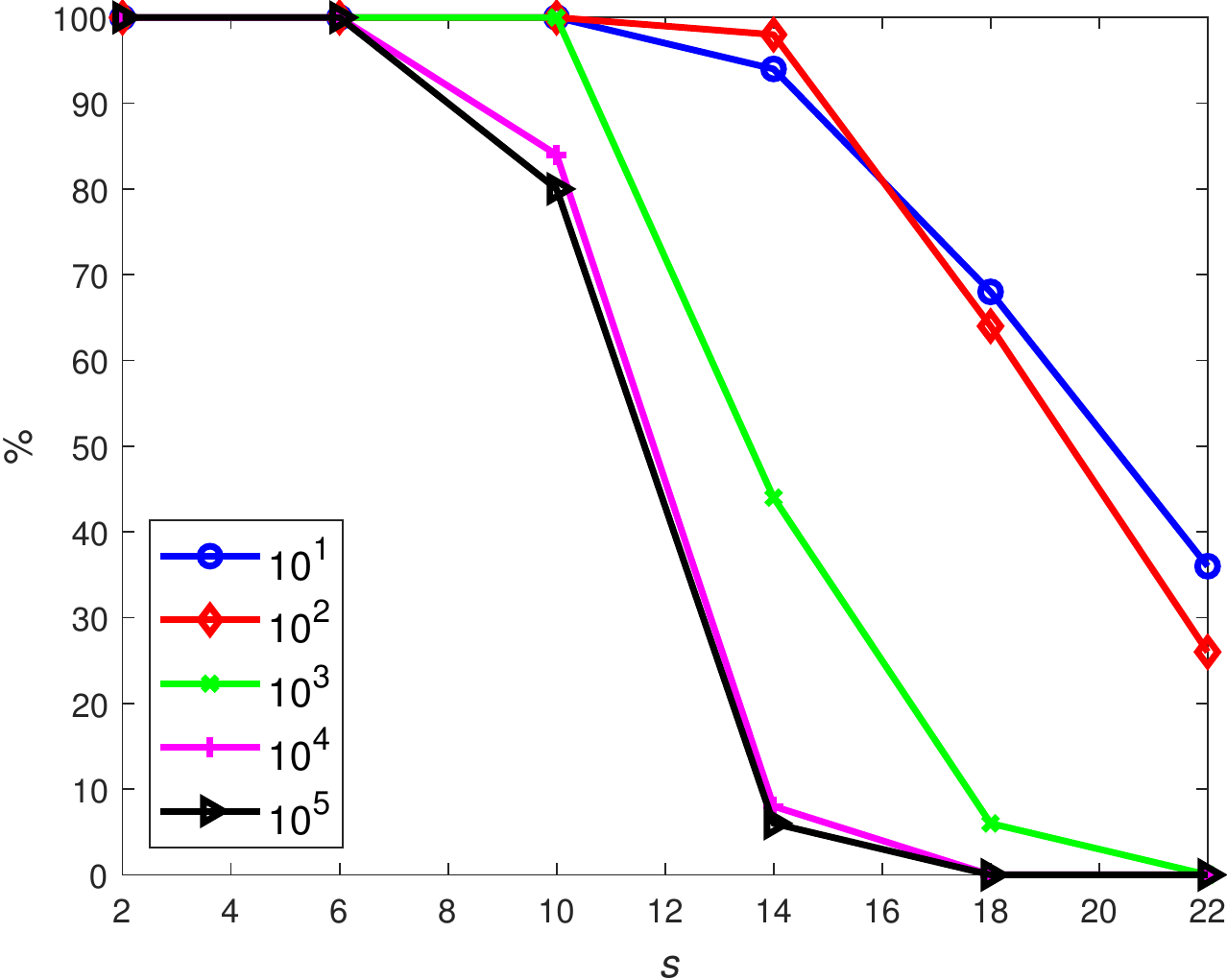}&
			\includegraphics[width=0.2\textwidth]{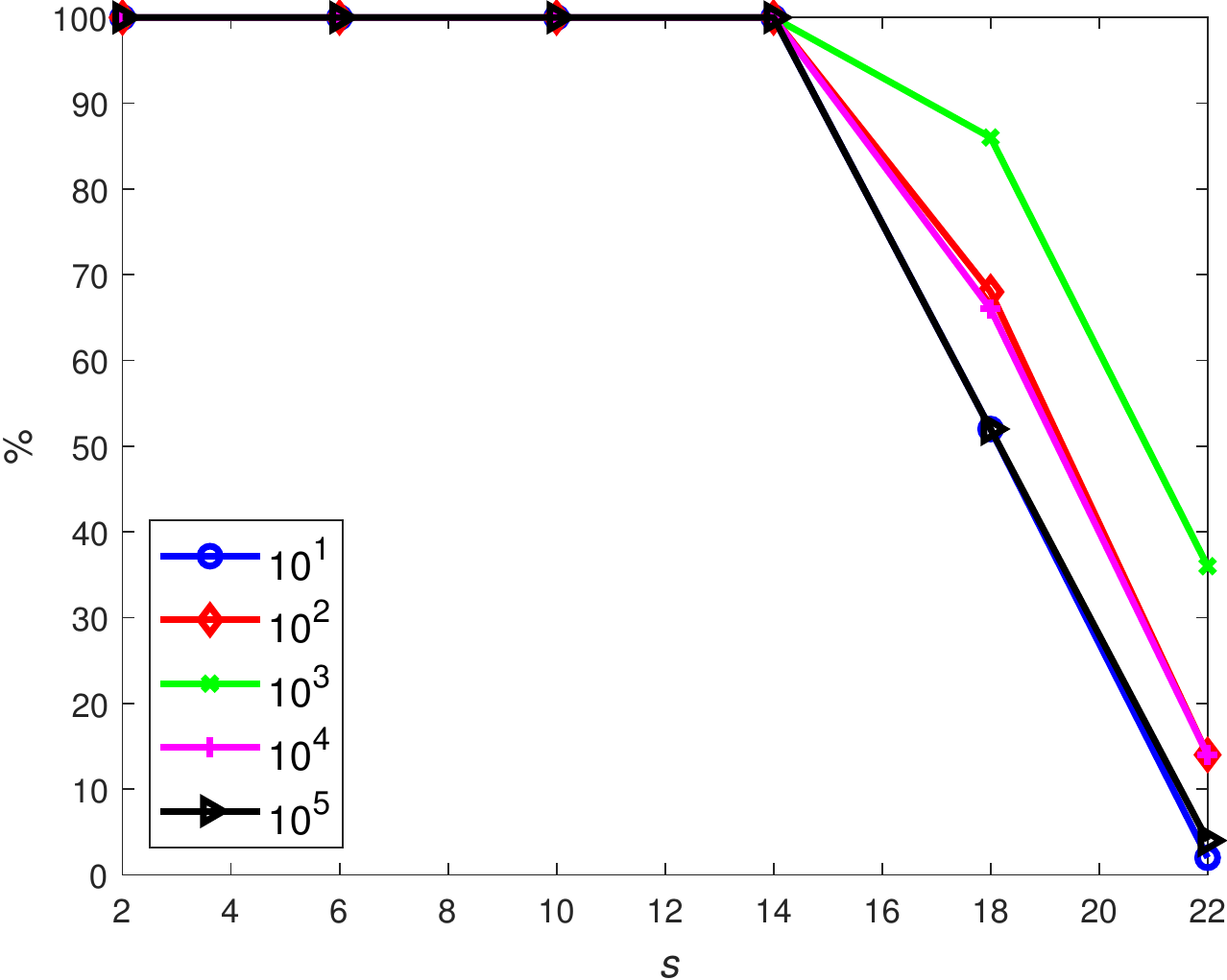}\\
			$F = 1, D=5$ & $F = 20, D=5$\\
			\includegraphics[width=0.2\textwidth]{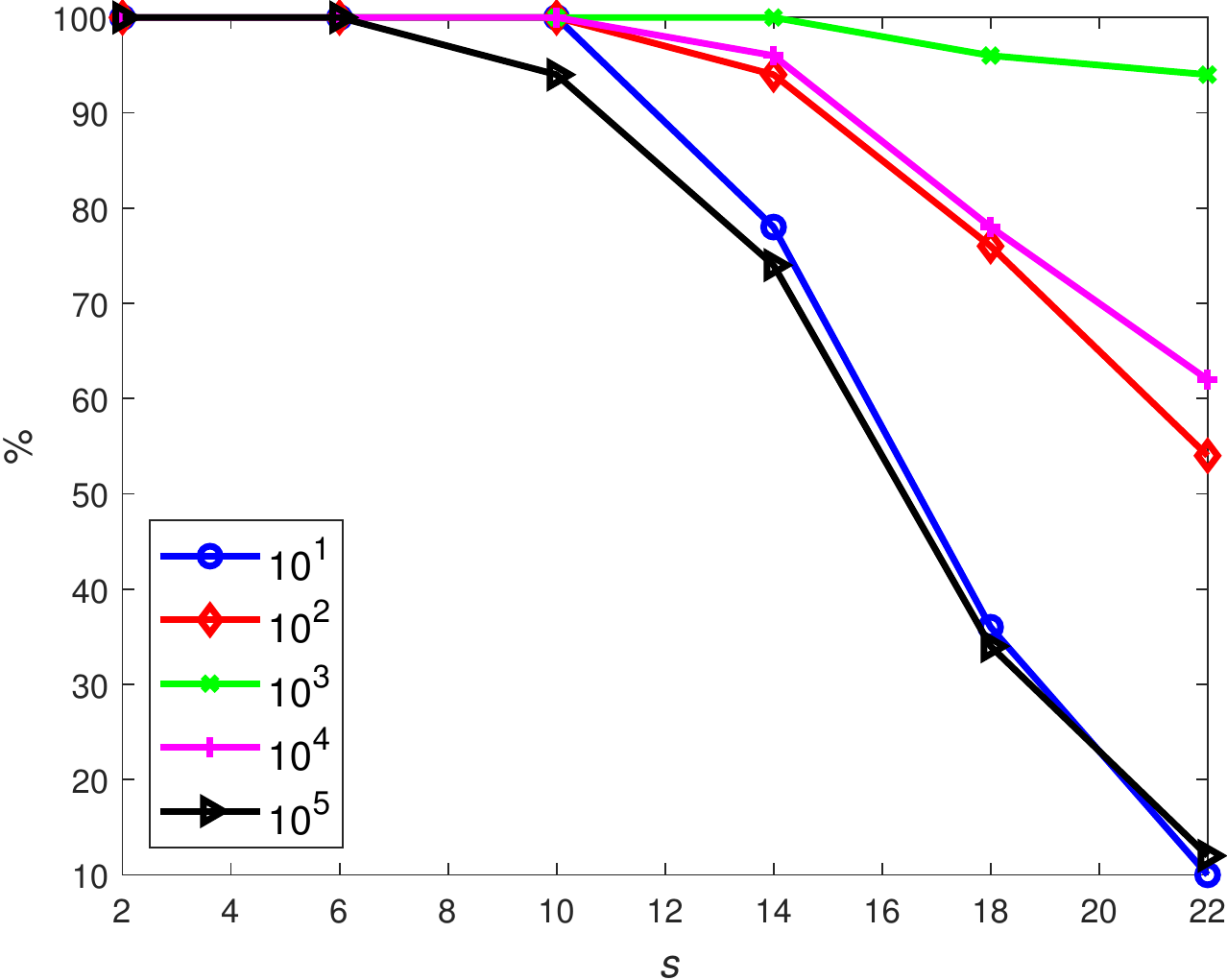}&
			\includegraphics[width=0.2\textwidth]{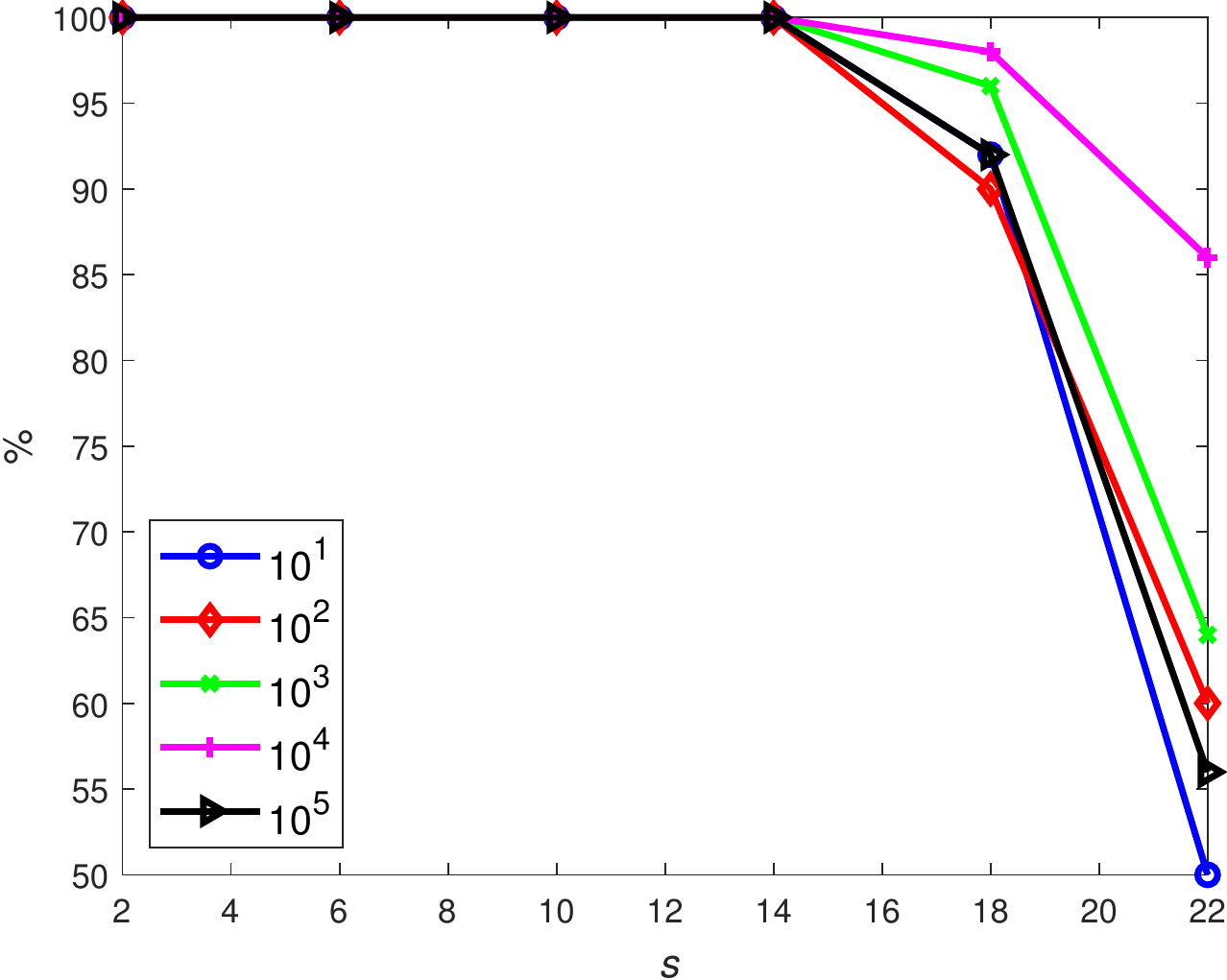}\\
		\end{tabular}
		\caption{Success rates of  different $a$   values  in  the TL1 model at coherence levels $F=1$ (left) and $F=20$ (right) as well as  high dynamic ranges of $D=3$ (top) and $D=5$ (bottom). }
		\label{fig:compare_TL1_F1_F20_min}
	\end{figure}

	\section{Discussions}\label{sect:discussion}
	Cand\'es and Wakin \cite{candes2008introduction} presented two principles in compressed sensing, i.e., \textit{sparsity} and \textit{incoherence}. 
	We reported in our previous work \cite{l1dl2} that higher coherence leads to better sparse recovery, which seems to contradict with the current belief in CS. 
	In this paper, we discuss  the dynamic range  and reveal its effect on the exact recovery via the $L_1$ approach. To our best of our knowledge, there has  been little discussion on the dynamic range in the CS literature, except for \cite{lorenz2011constructing}. We consider low-coherent matrices with $F=1$ and high-coherent ones with $F=20$. We record the success rates of different combinations of sparsity levels ($s=2:4:22$) and dynamic ranges $D=0:5$ in \Cref{tab:compare_dynamic_L1}. It shows that a higher dynamic range leads a better performance. It seems that the $L_1$ approach is independent on $D$ for relatively sparser signals.
	% When $D \leq 3$, the $L_1$ model has almost the same success rate in different values of $D$. Therefore the effect of dynamic range becomes obvious when sparsity is larger than 18 and $D\geq 3$. 
	
	Now that there are three quantities that may contribute to the success of sparse recovery, i.e., sparsity, coherence, and dynamic range, we try to give a comprehensive analysis by using the relative error $\|\h x^\ast-\h x\|_2 / \|\h x\|_2$ instead of the success rates, as the latter depends on the successful threshold. 
	% in the $L_1$ model, where $\h x^\ast$ is the ground truth of simulation,  while $\hat{\h x}$ is the minimizer of $L_1$. Note that the success rate focus only the percentage of trials whose related error is smaller than certain threshold. 
	We plot in \Cref{fig:highdymanic} the mean and the standard deviation of the relative errors from 50 random trails versus  coherence levels ($F= 1, 5, 10, 15, 20$). Based on \Cref{tab:compare_dynamic_L1}, we only consider the number of non-zeros value larger than 18 and $D \geq 3$. In each subfigure of \Cref{fig:highdymanic}, the curves decrease when $F$ increases, which means that higher coherence leads to better performance. This is consistent with the observation in \cite{l1dl2}. As for the dynamic range, we discover in \Cref{fig:highdymanic} that a larger value of $D$ leads to a smaller relative error.
	% In addition, the standard deviation of relative error is smaller in higher dynamic range. 
	Finally, the sparsity affects the performance in the way that smaller relative errors can be achieved for sparser signals.
	These numerical phenomena have not been reported in the CS literature, which motivate for future theoretical justifications.

	%\begin{table}[t]
	%%		\captionsetup{position=top} %<- Needed for using subtables created with the subfig package
	%		\caption{Success rate (\%) in solving different dynamic ranges via  the $L_1$ model. }\label{tab:compare_dynamic_L1} \begin{center}
	%			\footnotesize
	%				\begin{tabular}{c|c c c c c c||c c c c c c} \hline  
	%				& \multicolumn{6}{c||}{$F=1$} & \multicolumn{6}{c}{$F=20$}\\ \hline
	%					$s$ & 2  &   6  &  10  &  14  &  18  &  22  & 2  &   6  &  10  &  14  &  18  &  22  \\ \hline
	%					$D = 0$ & 100 & 100 & 80 & 4 & 0 & 0 & 100 & 100 & 100 & 100 & 50 & 0\\ \hline
	%					$D = 1$ & 100 & 100 & 80 & 4 & 0 & 0 & 100 & 100 & 100 & 100 & 52 & 0 \\ \hline
	%					$D = 2$ & 100 & 100 & 80 & 4 & 0 & 0 & 100 & 100 & 100 & 100 & 52 & 0 \\ \hline
	%					$D = 3$ & 100 & 100 & 80 & 4 & 0 & 0 & 100 & 100 & 100 & 100 & 52 & 0 \\ \hline
	%					$D = 4$ & 100 & 100 & 86 & 16 & 0 & 0 & 100 & 100 & 100 & 100 & 54 & 0 \\ \hline
	%					$D = 5$ & 100 & 100 & 88 & 38 & 12 & 0 & 100 & 100 & 100 & 100 & 76 & 16 \\ \hline 
	%					\end{tabular}
	%			
	%
	%			
	%		\end{center}
	%	
	%\end{table}

	\begin{table}[t]
		%		\captionsetup{position=top} %<- Needed for using subtables created with the subfig package
		\caption{Success rate (\%) in solving different dynamic ranges via  the $L_1$ model at two coherence levels $F=1$ and $F=20$. }\label{tab:compare_dynamic_L1} \begin{center}
			\footnotesize
			\begin{tabular}{c|c|c|c|c|c|c} \hline 
				\multicolumn{7}{c}{$F=1$} \\ \hline
				$s$ & 2  &   6  &  10  &  14  &  18  &  22  \\ \hline
				$D = 0$ & 100 & 100 & 80 & 4 & 0 & 0 \\ \hline
				$D = 1$ & 100 & 100 & 80 & 4 & 0 & 0 \\ \hline
				$D = 2$ & 100 & 100 & 80 & 4 & 0 & 0 \\ \hline
				$D = 3$ & 100 & 100 & 80 & 4 & 0 & 0 \\ \hline
				$D = 4$ & 100 & 100 & 86 & 16 & 0 & 0 \\ \hline
				$D = 5$ & 100 & 100 & 88 & 38 & 12 & 0 \\ \hline\hline
				\multicolumn{7}{c}{$F=20$} \\ \hline
				$s$ & 2  &   6  &  10  &  14  &  18  &  22  \\ \hline
				$D = 0$ & 100 & 100 & 100 & 100 & 50 & 0 \\ \hline
				$D = 1$ & 100 & 100 & 100 & 100 & 52 & 0 \\ \hline
				$D = 2$ & 100 & 100 & 100 & 100 & 52 & 0 \\ \hline
				$D = 3$ & 100 & 100 & 100 & 100 & 52 & 0 \\ \hline
				$D = 4$ & 100 & 100 & 100 & 100 & 54 & 0 \\ \hline
				$D = 5$ & 100 & 100 & 100 & 100 & 76 & 16 \\ \hline
			\end{tabular}

		\end{center}
		
	\end{table}
	
	%\begin{figure}[bhp]
	%	\centering 
	%	\begin{tabular}{ccc}
	%		$L_1$ & $L_1$-$L_2$ & $L_1/L_2$   \\
	%		\includegraphics[width=0.3\textwidth]{fig/L1_F}&
	%		\includegraphics[width=0.3\textwidth]{fig/L1mL2_F}&
	%		\includegraphics[width=0.3\textwidth]{fig/L1dL2_F}\\
	%		\includegraphics[width=0.3\textwidth]{fig/L1_F_D0}&
	%		\includegraphics[width=0.3\textwidth]{fig/L1mL2_F_D0}&
	%		\includegraphics[width=0.3\textwidth]{fig/L1dL2_F_D0}\\
	%		\end{tabular}
	%	\caption{The minimal value of three models in different coherence. The pink line is the objective value in ground truth. Here  $K = 20$ and $D= 5$ (first row) and $D= 0$ (second row).   }
	%	\label{fig:boxplot_f}
	%\end{figure}
	
	\begin{figure*}[htpb]
		\centering 
		\begin{tabular}{ccc}
			$s=18$ & $s=20$ & $s=22$   \\
			\includegraphics[width=0.27\textwidth]{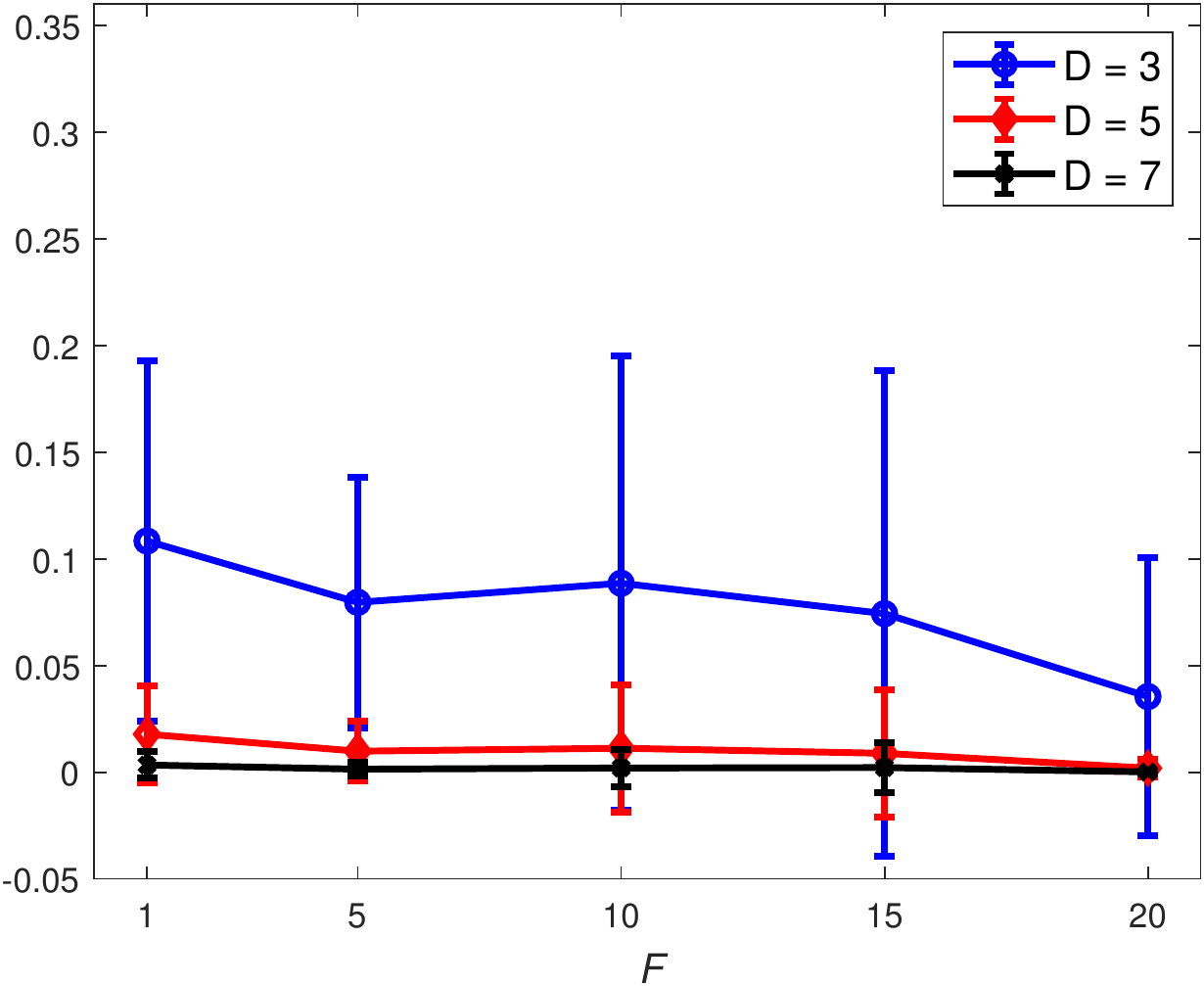}&
			\includegraphics[width=0.27\textwidth]{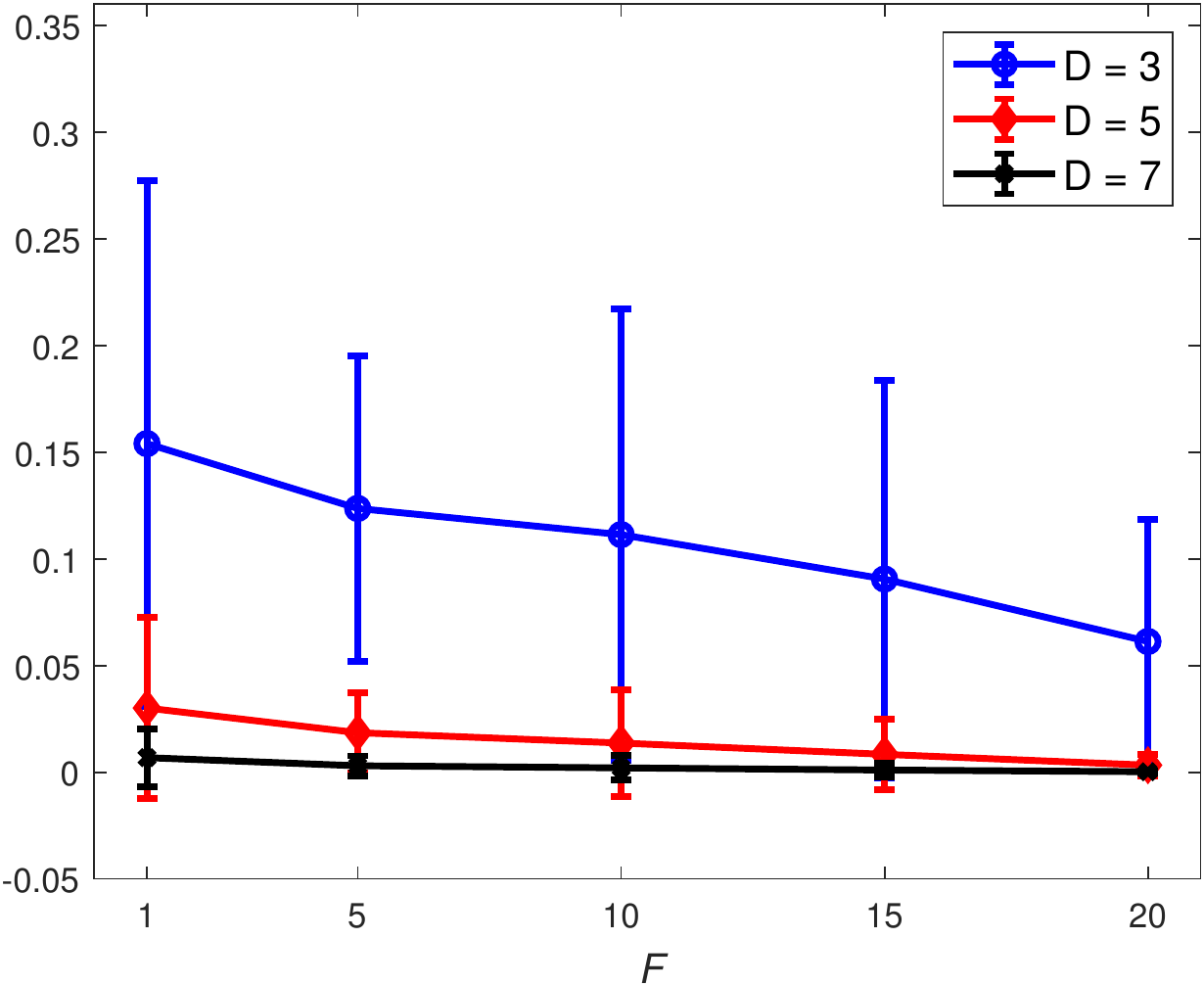}&
			\includegraphics[width=0.27\textwidth]{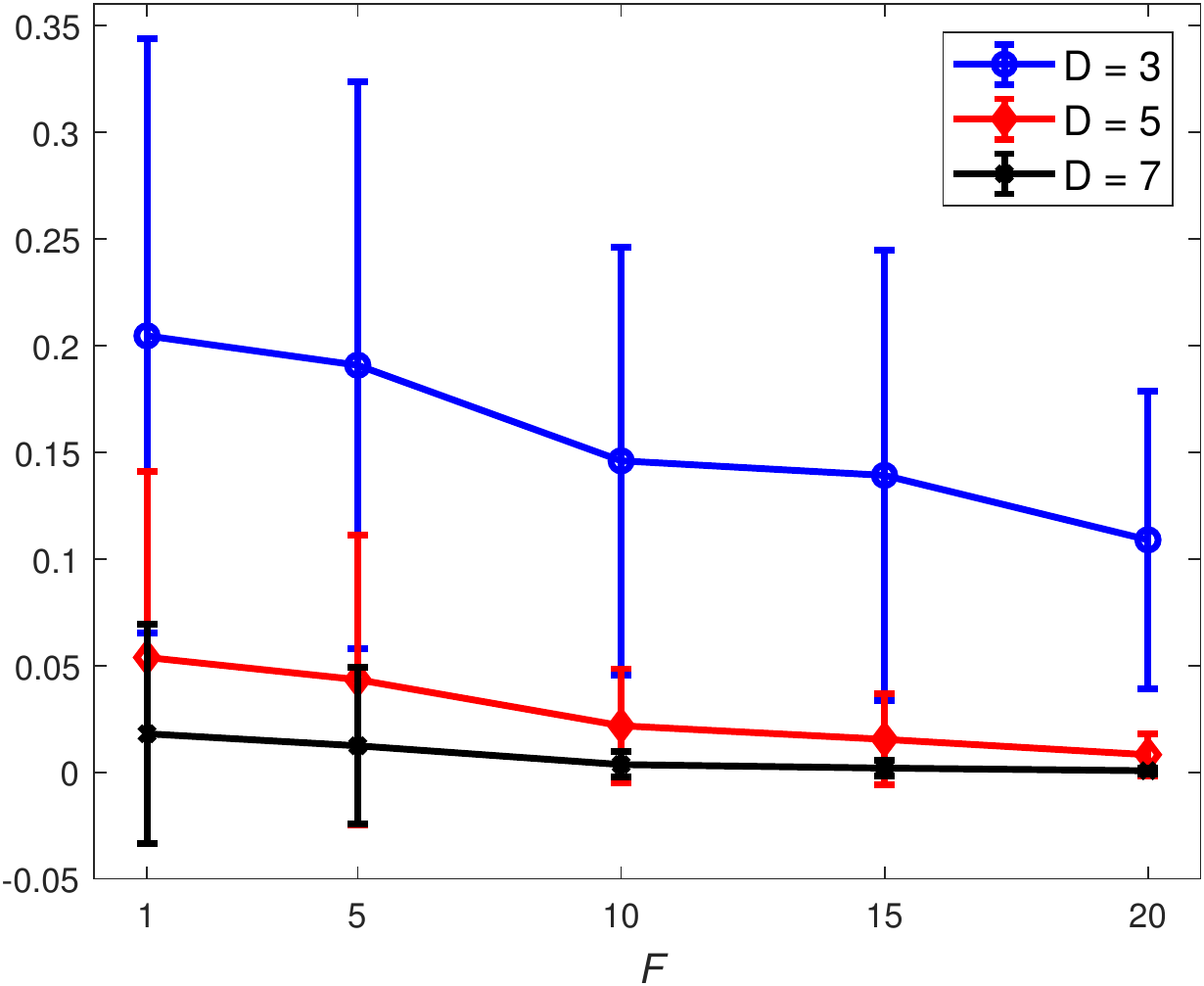}\\
		\end{tabular}
		\caption{The relative errors  $\|\h x^\ast-\hat{\h x}\|_2/\|\hat{\h x}\|_2$ produced by the $L_1$ approach for sparse signal recovery.  }
		\label{fig:highdymanic}
	\end{figure*}

	%
	%\begin{figure}[bhp]
	%	\centering 
	%	\begin{tabular}{ccc}
	%		$K = 1$ & $K = 10$ & $K = 20$   \\
	%		\includegraphics[width=0.3\textwidth]{fig/K1}&
	%		\includegraphics[width=0.3\textwidth]{fig/K10}&
	%		\includegraphics[width=0.3\textwidth]{fig/K20}\\
	%		\includegraphics[width=0.3\textwidth]{fig/K1_D0}&
	%		\includegraphics[width=0.3\textwidth]{fig/K10_D0}&
	%		\includegraphics[width=0.3\textwidth]{fig/K20_D0}\\
	%		\end{tabular}
	%	\caption{The relative error in objective value $\frac{R(\h x^\ast)-R( \hat{\h x})}{R(\hat{\h x})}$. Here $ \hat{\h x}$ is the ground truth and $\h x^\ast$ is the minimizer in different objective function $R(\h x)$: $L_1$ (\tr{$+$}), $L_1$-$L_2$ ({\color{green} $\ast$}), and $L_1/L_2$ (\tb{$o$}). Note that $D= 5$ (first row) and $D= 0$ (second row).   }
	%	\label{fig:boxplotsd}
	%\end{figure}
	%
	%

	\section{Conclusions and future works}\label{sect:conclusion}
	We studied the scale-invariant and parameter-free minimization $L_1/L_2$ to promote sparsity. We presented three numerical algorithms to minimize this nonconvex model based on the relationship between $L_1/L_2 $ and  $L_1$-$\alpha L_2 $ for certain $\alpha$. Experimental results compared the proposed algorithms with state-of-the-art methods in sparse recovery. Particularly important is the proposed algorithm works well  when the ground-truth signal has a high dynamic range. Last but not least, we analyzed the behaviors of the $L_1$ approach towards the exact recovery with varying sparsity, coherence, and dynamic range. Future works include the theoretical analysis on the effect of the high dynamic range towards sparse recovery as well as the applications of the ratio model in image processing such as blind deconvolution \cite{krishnan2011blind,repetti2015euclid}.

\appendix
\subsection{Proof of \Cref{lema:decreasing} }
\begin{proof}
		Based on the $\h x$-subproblem  in \eqref{equ:a2},
		we get
		\begin{equation*}
		\begin{split}
		&\left\|\h x^{(k+1)}\right\|_1  - \left\langle \h x^{(k+1)},  \frac{\alpha^{(k)}\h x^{(k)}}{\|\h x^{(k)}\|_2} \right\rangle + \frac{\beta}{2}\left\|\h x^{(k+1)} -\h x^{(k)}\right\|_2^2 \\  \leq &
		\left\|\h x^{(k)}\right\|_1  - \left\langle \h x^{(k)},  \frac{\alpha^{(k)}\h x^{(k)}}{\|\h x^{(k)}\|_2} \right\rangle + \frac{\beta}{2}\|\h x^{(k)} -\h x^{(k)}\|_2^2   \\  = & \left\|\h x^{(k)}\right\|_1  - \left\langle \h x^{(k)},  \frac{\alpha^{(k)}\h x^{(k)}}{\|\h x^{(k)}\|_2} \right\rangle.
		\end{split}
		\end{equation*} 
		After rearranging, we get the following inequality
		\begin{equation}\label{equ:l1_1}
		\begin{split}
		&	\|\h x^{(k+1)}\|_1  + \frac{\beta}{2}\|\h x^{(k+1)} -\h x^{(k)}\|_2^2    \\  \leq& \|\h x^{(k)}\|_1+ \alpha^{(k)}\left\langle \h x^{(k+1)}- \h x^{(k)},  \frac{\h x^{(k)}}{\|\h x^{(k)}\|_2} \right\rangle \\
		\leq  & \|\h x^{(k)}\|_1  + \alpha^{(k)}\left(\|\h x^{(k+1)}\|_2- \|\h x^{(k)}\|_2 \right)\\
		=& \alpha^{(k)}\|\h x^{(k+1)}\|_2.
		\end{split}
		\end{equation}	
		The second inequality is	owing to the convexity of Euclidean norm and the definition of $\alpha^{(k)}$. \Cref{lema:decreasing} is then obtained  by dividing $\|\h x^{(k+1)}\|_2$ on both sides of  \eqref{equ:l1_1}. 
	\end{proof}
	
	\subsection{Proof of \Cref{lema_lipschitz} }
	\begin{proof}
	Simple calculations lead to 
		\begin{equation}
		\label{equ:lip1}
		\begin{split}
		&\left\|\frac{\h x}{\|\h x\|_2} -  \frac{\h y}{\|\h y\|_2}\right\|_2^2  =  1 - \frac{ 2\langle\h x, \h y\rangle}{\|\h x \|_2 \| \h y\|_2} + 1\\
		= &  \frac{1}{\|\h x\|_2\|\h y\|_2}\Big(2\|\h x\|_2\|\h y\|_2 -2 \langle \h x, \h y \rangle\Big)\\
		\leq& \frac{1}{\|\h x\|_2\|\h y\|_2} \Big( \| \h x\|_2^2 +\| \h y\|_2^2 -2\langle \h x,\h y \rangle\Big)\\
		=&\frac{1}{\|\h x\|_2\|\h y\|_2} \|\h x - \h y\|_2^2.
		\end{split}
		\end{equation}
		For any $\h x$ satisfying $A \h x = \h b$, the minimal $L_2$ norm is reached by projecting the origin $\h 0$ onto the feasible set of $\{\h x\ |\ A \h x = \h b\}.$  It follows from the projection operator defined in \eqref{equ:projection} that
		\begin{equation}
		\label{equ:lip2}
		\|\h x\|_2  \geq  \|\mathbf{proj}(\h 0)\|_2 =  \| A^T(A A^T)^{-1}\h b\|_2.
		\end{equation}
		Combining \eqref{equ:lip1} and \eqref{equ:lip2}, we get \Cref{lema_lipschitz}. 
		%{\color{red} [The denominator of L should be squared, right?]}\tb{(No. \eqref{equ:lip1} is with squared norm but lemma's equation is without square. ) } 
	\end{proof}
	
	\subsection{Proof of \Cref{F_lipschitz}}
	\begin{proof}
		It is straightforward to have
		\begin{equation}\label{eq:lemma3_all}
		\begin{split}
		&\left\|\nabla w(\h x)- \nabla w(\h y)\right\|_2   = \left\|\frac{\| \h x \|_1}{\| \h x \|_2^2}\h x - \frac{\| \h y\|_1}{\| \h y \|_2^2}\h y\right\|_2 \\
		=& \left\|\frac{\| \h x \|_1}{\| \h x \|_2^2}\h x -\frac{\| \h x \|_1}{\| \h y\|_2^2}\h y + \frac{\| \h x \|_1}{\| \h y \|_2^2}\h y - \frac{\| \h y\|_1}{\| \h y \|_2^2}\h y\right\|_2 \\
		\leq &\|\h x \|_1  \left\| \frac{\h x}{\| \h x \|_2^2} -\frac{\h y}{\| \h y\|_2^2}\right\|_2 + \frac{1}{\|\h y\|_2}  \Big|\|\h x\|_1 - \|\h y\|_1 \Big|.\\
%		\leq & \|\h x \|_1 \left\| \frac{\h x}{\| \h x \|_2^2} -\frac{\h y}{\| \h y\|_2^2}\right\|_2 + \sqrt{n} L \|\h x- \h y\|_2.
		\end{split}
		\end{equation}
	We simplify  the first term in \eqref{eq:lemma3_all} by calculating
		\begin{equation*}
		\begin{split}
		&	\left\| \frac{\h x}{\| \h x \|_2^2} -\frac{\h y}{\| \h y\|_2^2}\right\|_2^2  =  \frac{1}{\|\h x\|_2^2}+ \frac{1}{\|\h y\|_2^2}-\frac{2\langle\h x, \h y\rangle}{\|\h x\|_2^2\|\h y\|_2^2}\\
		=&  \frac{\|\h x\|_2^2+\|\h y\|_2^2 - 2\langle\h x, \h y\rangle}{\|\h x\|_2^2\|\h y\|_2^2}
		= \left(\frac{\|\h x-\h y\|_2}{\|\h x\|_2\|\h y\|_2}\right)^2,
		%\leq & \sqrt{n}L \|\h x-\h y\|_2,
		\end{split}
		\end{equation*}
	and	using $\|\h x \|_1\leq \sqrt{n}\| \h x\|_2$. Therefore, we get 
		\begin{equation}\label{equ:firstterm}
			\|\h x\|_1\left\| \frac{\h x}{\| \h x \|_2^2} -\frac{\h y}{\| \h y\|_2^2}\right\|_2 \leq \sqrt{n}L \| \h x - \h y\|_2.
		\end{equation}
	As for the second term in \eqref{eq:lemma3_all}, we have it bounded by
		\begin{equation}\label{equ:2ndterm}
		\begin{split}
			& \frac{1}{\|\h y\|_2} \Big|\|\h x\|_1 - \|\h y\|_1 \Big|  \leq \frac{1}{\|\h y\|_2}  \|\h x - \h y\|_1 \\
			&  \leq \frac{\sqrt{n}}{\|\h y\|_2}  \|\h x - \h y\|_2  \leq \sqrt{n} L  \|\h x - \h y\|_2.
		\end{split}
		\end{equation}
		Combining \eqref{equ:firstterm} and \eqref{equ:2ndterm}, we obtain 
		\eqref{eq:lemma3}.
		%	\begin{equation*}
		%	\left\|\nabla w(\h x)- \nabla w(\h y)\right\|_2\leq 2\sqrt{n}L \|\h x-\h y\|_2.
		%	\end{equation*}
%		The last inequality is from the triangle inequality and $\|\h x \|_1\leq \sqrt{n}\| \h x\|_2$ for any $\h x \in \mathds{R}^n. $ 
	\end{proof}
	
	\subsection{Proof of \Cref{lemma:G}}
	\begin{proof} 
		It is straightforward that
		$$\Phi(\h x^\ast) = \h 0  \iff \h x^\ast = \mathrm{prox}_{\frac{1}{\beta}g}\left(\h x^{\ast} - \frac{1}{\beta}\nabla w(\h x^\ast)\right).$$ By the optimality condition~\cite{combettes2011proximal}, the latter relation holds if and only if there exists a vector $\h s $ such that
		\begin{equation*}
		\begin{split}
		0 & \in  \partial \| \h x^\ast \|_1 + \nabla w(\h x^\ast) + \beta\left(\h x^{\ast} - \h x^{\ast}  \right)  + A^T \h s \\
		& = \partial \| \h x^\ast \|_1 + \nabla w(\h x^\ast)   + A^T \h s,
		\end{split}
		\end{equation*}
		which implies that $\h x^*$ is a critical point of \eqref{equ:reformulation}. It follows from \eqref{eq:G} that \eqref{equ:reformulation} is equivalent to \eqref{equ:ratio} and hence $\h x^*$ is also a critical point of \eqref{equ:ratio}.
		According to the nonexpansiveness of the proximal operator and the Lipschitz continuousness of $\nabla w$, we have 
		\begin{equation*}
		\begin{split}
		& \left\|\Phi(\h x)-\Phi(\h y) \right\|_2  \\
		%		& = \beta \left\|\h x - \mathrm{prox}_{\frac{1}{\beta}g}\left(\h x^{\ast} - \frac{1}{\beta}\nabla w(\h x)\right) - \h y + \mathrm{prox}_{\frac{1}{\beta}g}\left(\h y - \frac{1}{\beta}\nabla w(\h y)\right) \right\|_2\\
		\leq &  \beta \left\|\mathrm{prox}_{\frac{1}{\beta}g}\left(\h x - \frac{1}{\beta}\nabla w(\h x)\right)- \mathrm{prox}_{\frac{1}{\beta}g}\left(\h y - \frac{1}{\beta}\nabla w(\h y)\right) \right\|_2\\
		& +  \beta \|\h x- \h y\|_2\\
		\leq &  \beta\left\|\left(\h x - \frac{1}{\beta}\nabla w(\h x)\right)-\left(\h y - \frac{1}{\beta}\nabla w(\h y)\right) \right\|_2 + \beta \|\h x- \h y\|_2\\
		\leq &  \|\nabla w(\h x) - \nabla w(\h y)\|_2 +  2\beta \|\h x- \h y\|_2\\
		\leq & (L_w+2\beta) \|\h x- \h y\|_2.  
		\end{split}
		\end{equation*}	
		%	The last inequality is from \Cref{F_lipschitz}. 
		The Lemma follows.
	\end{proof}
	
	% or
	%\appendix  % for no appendix heading
	% do not use \section anymore after \appendix, only \section*
	% is possibly needed
	
	% use appendices with more than one appendix
	% then use \section to start each appendix
	% you must declare a \section before using any
	% \subsection or using \label (\appendices by itself
	% starts a section numbered zero.)
	%
	
	%
	%\appendices
	%\section{Proof of the First Zonklar Equation}
	%Appendix one text goes here.
	%
	%% you can choose not to have a title for an appendix
	%% if you want by leaving the argument blank
	%\section{}
	%Appendix two text goes here.
	%
	
	% use section* for acknowledgment
	%\section*{Acknowledgment}
	%
	%
	%The authors would like to thank...

	% Can use something like this to put references on a page
	% by themselves when using endfloat and the captionsoff option.
	%\ifCLASSOPTIONcaptionsoff
	%  \newpage
	%\fi

	% trigger a \newpage just before the given reference
	% number - used to balance the columns on the last page
	% adjust value as needed - may need to be readjusted if
	% the document is modified later
	%\IEEEtriggeratref{8}
	% The "triggered" command can be changed if desired:
	%\IEEEtriggercmd{\enlargethispage{-5in}}
	
	% references section
	
	% can use a bibliography generated by BibTeX as a .bbl file
	% BibTeX documentation can be easily obtained at:
	% http://mirror.ctan.org/biblio/bibtex/contrib/doc/
	% The IEEEtran BibTeX style support page is at:
	% http://www.michaelshell.org/tex/ieeetran/bibtex/
	%
	\bibliographystyle{IEEEtran}
	\bibliography{refer_l1dl2}

\end{document}